\theoremstyle{plain}
\newtheorem{thm}{Theorem}[section]
\newtheorem{lemma}[thm]{Lemma} %%Delete [thm] to re-start numbering
\newtheorem{prop}[thm]{Proposition}
\theoremstyle{remark}
\newtheorem{remark}[thm]{Remark}
\theoremstyle{definition}
\newtheorem{defi}[thm]{Definition}
\newtheorem{example}[thm]{Example}
\def\today{{\number\day\space
 \ifcase\month\or
  January\or February\or March\or April\or May\or June\or
  July\or August\or September\or October\or November\or December\fi
 \space\number\year}}
\newcommand\Ac{{\mathcal{A}}}
\newcommand\Afr{{\mathfrak A}}
\newcommand\ah{{\hat a}}
\newcommand\Bc{{\mathcal{B}}}
\newcommand\betat{{\tilde\beta}}
\newcommand\bh{{\hat b}}
\newcommand\Bt{{\widetilde B}}
\newcommand\Cpx{{\mathbf C}}
\newcommand\Dc{{\mathcal{D}}}
\newcommand\Dt{{\widetilde D}}
\newcommand\eps{\epsilon}
\newcommand\Et{{\widetilde E}}
\newcommand\ev{{\operatorname{ev}}}
\newcommand\HEu{{\EuScript H}}                   % requires package euscript
\newcommand\Mcal{{\mathcal{M}}}
\newcommand\Nats{{\mathbf N}}
\newcommand\Nc{{\mathcal{N}}}
\newcommand\phit{{\tilde\phi}}
\newcommand\Qc{{\mathcal{Q}}}
\newcommand\Reals{{\mathbf R}}
\newcommand\restrict{{\upharpoonright}}
\newcommand\rhoh{{\hat\rho}}
\newcommand\sigmat{{\tilde\sigma}}
\newcommand\xh{{\hat x}}
\newcommand\yh{{\hat y}}
\begin{document}

\title{Tail algebras of quantum exchangeable random variables}

\author[Dykema]{Kenneth J.\ Dykema$^{*}$}
\address{K.\ Dykema, Department of Mathematics, Texas A\&M University,
College Station, TX 77843-3368, USA}
\email{kdykema@math.tamu.edu}
\thanks{\footnotesize $^{*}$Research supported in part by NSF grant DMS-0901220}

\author[K\"ostler]{Claus K\"ostler}
\address{C.\ K\"ostler,
Institute of Mathematical and Physical Sciences,
Aberystwyth University,
Aberystwyth SY23 3BZ,
Wales, U.K.}
\email{cck@aber.ac.uk}

\subjclass[2000]{46L53 (46L54, 81S25, 46L10)}
\keywords{quantum exchangeable, de Finetti, amalgamated free product}

\date{November 5, 2012}

\begin{abstract}
We show that any countably generated von Neumann algebra with specified normal faithful
state can arise as the tail algebra of a quantum exchangeable sequence of noncommutative random variables.
We also characterize the cases when the state corresponds to a limit of convex combinations of free products states.
\end{abstract}

\maketitle

\section{Introduction}
Exchangeability is a basic distributional symmetry in probability.
It means that the distribution of a sequence of random variables is invariant under finite permutations of these random variables. The de Finetti theorem characterizes an exchangeable infinite sequence of random variables to be identically distributed and conditionally independent over its tail $\sigma$-algebra. An alternative formulation of this famous theorem is that the law of such a sequence is a mixture of infinite product measures,  where the mixture is specified by a certain random probability measure (see e.g.~\cite{Ka05}). Hewitt and Savage~\cite{HS55} cast this theorem in terms of symmetric measures on infinite Cartesian products of a compact Hausdorff space $S$, and inspired St{\o}rmer's transfer~\cite{St69} of their approach to a C*-algebraic setting: symmetric states on an infinite tensor product of a C*-algebra with itself are identified as a mixture of infinite product states. In analogy with the classical situation, here the mixture is specified by a probability measure on the state space of the C*-algebra.

Recently a noncommutative de Finetti theorem was discovered by K\"ostler and Speicher~\cite{KSp09} in the realm of Voiculescu's free probability theory (see~\cite{VDN92}).
Replacing random variables by operators and the role of permutations by the natural coaction of Wang's quantum permutations~\cite{W98}, the notion of a quantum exchangeable sequence was introduced in a framework of noncommutative probability spaces.
In close analogy to the classical case, a quantum exchangeable infinite sequence is characterized as being identically distributed and `conditionally free' over its tail algebra. Here `conditional freeness'  means `freeness with amalgamation'.         

In this note we have a closer look at tail algebras of quantum exchangeable sequences. Our main results are:

\begin{itemize}
\item[$\diamond$] Any countably generated von Neumann algebra may appear as the tail algebra of a quantum exchangeable sequence of selfadjoint operators (see Theorem~\ref{thm:givenN}).
\item[$\diamond$] The tail algebra of a quantum exchangeable sequence lies in the center of the von Neumann algebra generated by the sequence if and only if the corresponding state is a limit of convex combinations of free product states (see Proposition~\ref{prop:centraltail}).   
\item[$\diamond$] There exist quantum exchangeable sequences of projections generating a finite factor
and whose tail algebra is nontrivial and abelian (see Example~\ref{example}).  
Thus, the corresponding state is not a limit of convex combinations of free product states.
\end{itemize}
Altogether these results show that, as to be expected, the structure of tail algebras of quantum exchangeable infinite sequences has a much higher complexity than those of tail $\sigma$-algebras of exchangeable sequences in probability theory. 

\smallskip
\noindent
{\bf Acknowledgment:}
Most of this research was conducted while 
at the Erwin Schr\"od\-inger Institute during the program on Bialgebras in Free Probability;
the authors would like to thank the Institute and the organizers of the program.
They also thank an anonymous referee for helpful comments.

\section{Preliminaries}

A {\em noncommutative probability space} is a pair $(A,\phi)$, where $A$ is a unital algebra
over the complex numbers and $\phi$ is a linear functional on $A$ sending $1$ to $1$,
and the elements of $A$ are called {\em noncommutative random variables}.
A {\em W$^*$--noncommutative probability space} is one in which $A$ is a von Neumann algebra
and $\phi$ is a normal state.
In this paper, we only consider W$^*$--noncommutative probability spaces $(A,\phi)$ where the state $\phi$ is faithful.
The {\em Noncommutative de Finetti Theorem} of K\"ostler and Speicher~\cite{KSp09}, states that
a sequence $(x_i)_{i=1}^\infty$ of noncommutative random variables in a W$^*$--noncommutative probability space is quantum exchangeable if and only if the sequence is free over the tail algebra of the sequence,
with respect to the $\phi$--preserving conditional expectation.
Quantum exchangeability of the sequence is defined in terms of a natural coaction of the quantum permutation group
of Wang~\cite{W98}.
(See~\cite{KSp09} for more on this).
The {\em tail algebra} is the von Neumann algebra $\bigcap_{n=1}^\infty W^*(\{x_i\mid i\ge n\})$
and the $\phi$--preserving conditional expectation $E$ from $W^*(\{x_i\mid i\ge1\})$
onto the tail algebra is guaranteed to exist.
This was proved in~\cite{K10}, see also Proposition 4.2 of~\cite{KSp09}.

The following is the ``hard part'' of the noncommutative de Finetti theorem
(Thm.\ 1.1 of~\cite{KSp09}).
\begin{thm}[\cite{KSp09}]\label{thm:amalgfp}
Let $(x_i)_{i=1}^\infty$
be a quantum exchangeable sequence in the W$^*$--noncommutative probability space $(\Mcal,\phi)$, where $\phi$
is faithful, and suppose $\Mcal$ is generated by $\{x_i\mid i\in I\}$.
Let $\Nc$ be the tail algebra and let $E:\Mcal\to\Nc$ be the $\phi$--preserving
conditional expectation onto $\Nc$.
Let $\Ac_i$ be the von Neumann subalgebra of $\Mcal$ generated by $\Nc\cup\{x_i\}$.
Then the family $(\Ac_i)_{i\in I}$ is free with amalgamation over $\Nc$, with respect to $E$
and, therefore,
\begin{equation}\label{eq:Mfp}
(\Mcal,E)\cong(*_\Nc)_{i=1}^\infty(\Ac_i,E_i)
\end{equation}
is isomorphic to the W$^*$--amalgamated free product of countably infinitely many copies
of $(\Ac_1,E_1)$,
where $E_i:\Ac_i\to\Nc$ is the restriction to $\Ac_i$ of $E$.
\end{thm}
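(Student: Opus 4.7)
My plan is to verify freeness with amalgamation over $\Nc$ (with respect to $E$) directly from the coaction of Wang's quantum permutation group that encodes quantum exchangeability; once that is established, the isomorphism (\ref{eq:Mfp}) with the W$^*$--amalgamated free product follows from its universal property, together with the identical joint *--distribution over $\Nc$ of the $\Ac_i$'s (a simpler consequence of quantum exchangeability, via transpositions).

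The first step is to set up the right equivariance. For each $n$, quantum exchangeability provides a coaction $\alpha_n$ of the quantum permutation algebra $A_s(n)$ with magic unitary $(u_{ji})_{j,i=1}^n$ satisfying $\alpha_n(x_i) = \sum_{j=1}^n x_j \otimes u_{ji}$ for $i\le n$, with $(\phi \otimes \mathrm{id}) \alpha_n = \phi(\cdot)\, 1$. I would first show that $\Nc$ lies in the fixed-point set of $\alpha_n$, so $\alpha_n(b) = b \otimes 1$ for $b\in\Nc$, and deduce from the ergodic-average description of $E$ coming from~\cite{K10} the equivariance $(E \otimes \mathrm{id}) \alpha_n(a) = E(a) \otimes 1$.

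The core step is the moment vanishing. Fix $a_k \in \Ac_{i_k}$ ($k=1,\ldots,m$) with $E(a_k)=0$ and alternating indices $i_1\ne i_2\ne\cdots\ne i_m$. By $\sigma$--weak density (using faithfulness of $\phi$) I may assume $a_k = p_k(x_{i_k})$ for some *--polynomial $p_k$ in one variable with coefficients in $\Nc$. Since $\Nc$ is fixed by $\alpha_n$, applying $(E\otimes\mathrm{id}) \alpha_n$ to the product gives
\begin{equation*}
E(a_1\cdots a_m)\otimes 1 \;=\; \sum_{j_1,\ldots,j_m=1}^n E\bigl(p_1(x_{j_1})\cdots p_m(x_{j_m})\bigr) \otimes u_{j_1 i_1}\cdots u_{j_m i_m}.
\end{equation*}
The magic unitary relation $u_{ji}u_{ji'}=0$ for $i\ne i'$ forces the contributing terms to have $j_k\ne j_{k+1}$ throughout the sum. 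Pairing the identity against suitable states on $A_s(n)$ (e.g.\ the Haar state) and letting $n\to\infty$, combined with $E(p_k(x_{j}))=0$ for every $j$ (by identical distribution), should force $E(a_1\cdots a_m)=0$.

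The hardest part, I expect, is precisely this last extraction. Classically the symmetric group would collapse the sum to a diagonal term that vanishes immediately by centering; in the quantum setting the magic unitary entries interact nontrivially on alternating configurations, so one must exploit genuine noncommutativity of $A_s(n)$ (and presumably pass to an inverse limit $A_s(\infty)$) to suppress the off-diagonal contributions. Simultaneously, the noncommuting $\Nc$--coefficients inside each $p_k$ must be shuffled through the coaction consistently, which works only because $\Nc$ is in the fixed-point algebra. Once freeness with amalgamation is established, the identification (\ref{eq:Mfp}) is essentially formal from the universal property of the amalgamated free product.
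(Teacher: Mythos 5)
The paper does not prove this theorem: it is quoted from K\"ostler and Speicher \cite{KSp09} (their Theorem~1.1) as the ``hard part'' of the noncommutative de Finetti theorem, so there is no in-paper proof to compare yours against. Measured against the actual argument in \cite{KSp09}, your outline does follow the correct overall strategy --- realize quantum exchangeability via coactions of $A_s(n)$, show the tail algebra lies in the fixed-point algebra so that $\alpha_n(b)=b\otimes 1$ for $b\in\Nc$, deduce equivariance of the $\phi$-preserving conditional expectation, and then try to kill alternating centered moments. But the decisive step is missing, and you flag it yourself with ``should force $E(a_1\cdots a_m)=0$.'' That step is where essentially all of the content of the theorem lives, and as written your argument does not close.

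Concretely: pairing your invariance identity against the Haar state $h$ of $A_s(n)$ produces a sum over all multi-indices $(j_1,\ldots,j_m)$ with $j_k\ne j_{k+1}$, weighted by $h(u_{j_1i_1}\cdots u_{j_mi_m})$. The hypothesis $E(p_k(x_j))=0$ for each $j$ does \emph{not} make the individual terms $E\bigl(p_1(x_{j_1})\cdots p_m(x_{j_m})\bigr)$ vanish when some non-adjacent $j_k$'s coincide, and the Haar weights do not concentrate on a single configuration as $n\to\infty$; their asymptotics are of Weingarten type and are governed by noncrossing partitions. This is exactly the point where the quantum permutation group does strictly more than the classical one: a classical exchangeability argument of the kind you sketch only yields spreadability/conditional-independence-type relations, which are strictly weaker than freeness with amalgamation. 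The proof in \cite{KSp09} closes the gap by an induction on the length of the word, exploiting the magic-unitary relations (rows and columns of mutually orthogonal projections summing to $1$) to derive the operator-valued moment--cumulant relations characterizing an amalgamated free family; some such combinatorial analysis is unavoidable. Two smaller points also need care: quantum exchangeability is defined as invariance of joint distributions, and promoting this to an honest normal coaction on $\Mcal$ that fixes $\Nc$ pointwise is itself a lemma of \cite{KSp09}; and your density reduction to $a_k=p_k(x_{i_k})$ must be arranged to preserve the centering $E(a_k)=0$ (by subtracting $E(p_k(x_{i_k}))\in\Nc$), which is routine but should be said.
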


The next result is the ``easy part'' of the noncommutative de Finetti theorem
(Prop.\ 3.1 of~\cite{KSp09}):
\begin{prop}[\cite{KSp09}]\label{prop:free}
Let 
\[
(\Mcal,E)\cong(*_\Bc)_{i=1}^\infty(\Ac_i,E_i)
\]
be an amalgamated free product of von Neumann algebras, where every $E_i$ is faithful and normal.
Let $\phi$ be any normal, faithful state on $\Bc$ and denote also by $\phi$ the state $\phi\circ E$
on $\Mcal$.
If $x_i\in\Ac_i$ is such that the moments $E_i(x_ib_1x_ib_2\cdots b_{n-1}x_i)$
for all $n\in\Nats$ and $b_1,\ldots b_{n-1}\in\Bc$ are independent of $i$,
then the sequence $(x_i)_{i=1}^\infty$ in the noncommutative probability space $(\Mcal,\phi)$
is quantum exchangeable.
\end{prop}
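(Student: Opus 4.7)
The plan is to verify quantum exchangeability directly from its moment-based definition. Writing $(u_{ij})$ for the magic unitary generating Wang's algebra $A_s(N)$, the condition to be checked is that for every $N,k\ge 1$ and all $i_1,\ldots,i_k\in\{1,\ldots,N\}$,
\begin{equation*}
\sum_{j_1,\ldots,j_k=1}^{N} \phi(x_{j_1}\cdots x_{j_k})\, u_{j_1 i_1}u_{j_2 i_2}\cdots u_{j_k i_k} \;=\; \phi(x_{i_1}\cdots x_{i_k})\cdot 1_{A_s(N)}.
\end{equation*}
I would split the verification into two steps: first, identify the scalar moment $\phi(x_{j_1}\cdots x_{j_k})$ as a function of the kernel partition $\ker(j_1,\ldots,j_k)$ only; second, evaluate the summation on the left using magic-unitary combinatorics.

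For the first step, write $\phi(x_{j_1}\cdots x_{j_k}) = \phi(E(x_{j_1}\cdots x_{j_k}))$ and expand each $x_{j_\ell} = E_{j_\ell}(x_{j_\ell}) + \mathring{x}_{j_\ell}$. By the amalgamated freeness of $(\Ac_i)_{i\ge 1}$ over $\Bc$ with respect to $E$, every alternating centered product has $E$-value zero, while consecutive factors lying in a common $\Ac_i$ can be absorbed via $E_i$. Organizing the expansion accordingly, $E(x_{j_1}\cdots x_{j_k})$ becomes a $\Bc$-valued polynomial whose shape depends only on the pattern of equalities among $j_1,\ldots,j_k$---that is, on $\ker(j)$---and whose elementary ingredients are the moments $E_i(\mathring{x}_i b_1 \mathring{x}_i b_2 \cdots b_{m-1}\mathring{x}_i)$ with $b_r\in\Bc$. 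By hypothesis these are independent of $i$, so $\phi(x_{j_1}\cdots x_{j_k})$ is a well-defined function $M(\pi)$ of the kernel partition $\pi = \ker(j)$ alone.

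For the second step, re-group the sum by partition as $\sum_\pi M(\pi)\, T(\pi)$, where $T(\pi) = \sum_{j:\,\ker(j)=\pi} u_{j_1 i_1}\cdots u_{j_k i_k}$, and show that $T(\ker(i)) = 1_{A_s(N)}$ while $T(\pi) = 0$ for every other $\pi$. The magic-unitary relations $u_{ab}u_{ab'} = \delta_{bb'}u_{ab}$ and $u_{ab}u_{a'b} = \delta_{aa'}u_{ab}$ immediately kill any term in which two consecutive positions agree in $i$ but not in $j$, or conversely; an inductive argument using the column-sum identity $\sum_a u_{ab} = 1$ extends this to non-consecutive conflicts. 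Combined with the total-sum identity $\sum_{j_1,\ldots,j_k} u_{j_1 i_1}\cdots u_{j_k i_k} = 1_{A_s(N)}$ (a right-to-left telescoping via the column sums), these observations pin down $T(\ker(i)) = 1_{A_s(N)}$ and $T(\pi) = 0$ otherwise. Combining the two steps then yields $\sum_\pi M(\pi)\,T(\pi) = M(\ker(i))\cdot 1_{A_s(N)} = \phi(x_{i_1}\cdots x_{i_k})\cdot 1_{A_s(N)}$, which is the required identity.

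The main obstacle lies in the second step. While the vanishing from consecutive index clashes is immediate from the magic-unitary relations, establishing $T(\pi) = 0$ for partitions $\pi\ne\ker(i)$ whose disagreement with $\ker(i)$ occurs only at non-consecutive positions requires a careful inductive combinatorial argument; this is the sort of partition-compatibility identity that is standard in the quantum-permutation-group setting and would deserve a separate lemma in a polished write-up.
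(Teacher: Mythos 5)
First, a remark on the comparison: the paper does not prove this proposition at all --- it is quoted verbatim as Prop.~3.1 of \cite{KSp09} (the ``easy part'' of the noncommutative de Finetti theorem) --- so your argument can only be measured against the standard proof in that reference. Your overall architecture is the right one, and your first step is correct: by freeness with amalgamation over $\Bc$ and the assumed equality of the $\Bc$-valued moments, $\phi(x_{j_1}\cdots x_{j_k})$ is indeed a function $M(\pi)$ of $\pi=\ker(j)$ alone. The gap is in the second step: the combinatorial lemma you defer to, namely $T(\pi)=1_{A_s(N)}$ for $\pi=\ker(i)$ and $T(\pi)=0$ otherwise, is \emph{false} for crossing partitions, so no inductive argument will establish it. Concretely, take $k=4$, $i=(1,2,1,2)$ and $\pi=\ker(i)=\{\{1,3\},\{2,4\}\}$, and represent $A_s(4)$ on $M_2(\Cpx)$ via the magic unitary with rows $(p,0,1-p,0)$, $(0,q,0,1-q)$, $(1-p,0,p,0)$, $(0,1-q,0,q)$, where $p,q$ are rank-one projections at angle $\pi/4$. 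Then $T(\pi)=\sum_{a\ne b}u_{a1}u_{b2}u_{a1}u_{b2}=pqpq+p(1-q)p(1-q)+(1-p)q(1-p)q+(1-p)(1-q)(1-p)(1-q)=\tfrac12\ne 1$ (and correspondingly $T(\text{singletons})=\tfrac12\ne 0$). This is the same phenomenon as the fixed-point space of tensor powers of the fundamental representation of the quantum permutation group being spanned by \emph{noncrossing} partitions only; your interval-collapsing trick via $u_{ab}u_{ab'}=\delta_{bb'}u_{ab}$ and $\sum_a u_{ab}=1$ genuinely requires an interval block and does not extend to crossing patterns.

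The repair --- and this is what the cited proof actually does --- is not to group by the exact kernel but to expand $E(x_{j_1}\cdots x_{j_k})$ by the operator-valued moment--cumulant formula over $\Bc$. Vanishing of mixed cumulants (equivalent to freeness with amalgamation) restricts the expansion to noncrossing $\sigma\in NC(k)$ with $\sigma\le\ker(j)$, and the equidistribution hypothesis makes the contribution $c_\sigma$ of each such $\sigma$ independent of $j$. After exchanging the order of summation one only needs the \emph{cumulative} identity $\sum_{j:\,\ker(j)\ge\sigma}u_{j_1i_1}\cdots u_{j_ki_k}=1_{A_s(N)}$ if $\ker(i)\ge\sigma$ and $=0$ otherwise, \emph{for noncrossing $\sigma$}; that identity is correct and is proved exactly by your collapsing argument applied to an interval block of $\sigma$, followed by induction on the number of blocks. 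Summing over $\sigma\in NC(k)$ with $\sigma\le\ker(i)$ then reproduces $\phi(x_{i_1}\cdots x_{i_k})$ by the moment--cumulant formula. In short: the missing idea is the passage through free cumulants, which replaces the false all-partitions identity by the true noncrossing one; as written, your step two attempts to prove a statement that fails already in $A_s(4)$.
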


In this note, we will prove (Theorem~\ref{thm:givenN})
that every countably generated von Neumann algebra $\Nc$
with specified normal faithful state $\phi$ can arise as the tail algebra of a quantum exchangeable
sequence in a W$^*$--noncommutative probability space $(\Mcal,\phi_\Mcal)$, in such a way that
the restriction of $\phi_\Mcal$ to $\Nc$ is $\phi$.

We also investigate the question, given a quantum exchangeable sequence $(x_i)_{i=1}^\infty$ in
a W$^*$--noncommutative probability space $(\Mcal,\phi)$,
of when the distribution of $(x_i)_{i=1}^\infty$ under $\phi$ is a limit of convex combinations of equidistributed free
product states.
Naturally enough, this occurs if and only if the tail algebra of the sequence commutes with all $x_i$.
See Section~\ref{sec:fps} for more details.
This may be compared to St\o{}rmer's result~\cite{St69}, that in the commutative context, all symmetric states
are limits of convex combinations of tensor powers.

\section{Examples of tail algebras}

Now we investigate the content of the tail algebra in the construction of Proposition~\ref{prop:free}.

\begin{prop}\label{prop:tail}
In the setting of Proposition~\ref{prop:free},
the tail algebra $\Nc$ of the sequence $(x_i)_{i=1}^\infty$
is a von Neumann subalgebra of $\Bc$
and is, in fact,
the smallest unital von Neumann subalgebra $\Bc_0$ of $\Bc$
satisfying
\begin{equation}\label{eq:Bc0}
E(b_0x^{k_1}b_1x^{k_2}\cdots b_{n-1}x^{k_n}b_n)\in\Bc_0.
\end{equation}
whenever 
$n\in\Nats$, $k_1,\ldots,k_n\in\Nats$, $b_0,b_1,\ldots,b_n\in\Bc_0$ and $x=x_i$.
(Note that the above expression is independent of the choice of $i\in\Nats$.)
\end{prop}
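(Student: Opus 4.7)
Let $\Bc_0$ denote the smallest unital von Neumann subalgebra of $\Bc$ satisfying~\eqref{eq:Bc0} (well defined as the intersection of all such). Write $\Mcal_x := W^*(\{x_j \mid j \ge 1\})$ and $\Nc_m := W^*(\{x_j \mid j \ge m+1\})$. The plan is to prove, in turn, that $\Nc \subseteq \Bc$, that $\Bc_0 \subseteq \Nc$, and finally that $\Nc \subseteq \Bc_0$.

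For $\Nc \subseteq \Bc$: setting $\widetilde{\Mcal}_n := W^*(\bigcup_{i \ge n}\Ac_i) \cong (*_\Bc)_{i \ge n}\Ac_i$, one has $\Nc \subseteq \bigcap_n \widetilde{\Mcal}_n$, and it is standard that $\bigcap_n \widetilde{\Mcal}_n = \Bc$ in any amalgamated free product: the orthogonal decomposition of $L^2(\Mcal,\phi)$ into $L^2(\Bc,\phi)$ plus the centered alternating product subspaces indexed by finite alternating strings of indices shows that each centered summand fails to lie in $L^2(\widetilde{\Mcal}_n,\phi)$ as soon as $n$ exceeds every index it uses, so any vector in the intersection reduces to one in $L^2(\Bc)$.

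For $\Bc_0 \subseteq \Nc$ I would verify that $\Nc$ itself satisfies~\eqref{eq:Bc0}. Fix $b_0,\ldots,b_n \in \Nc$ and set $y_j := b_0 x_j^{k_1} b_1 \cdots b_{n-1} x_j^{k_n} b_n$. By the hypothesis of Proposition~\ref{prop:free}, $\beta := E(y_j) \in \Bc$ is independent of $j$; by quantum exchangeability together with Theorem~\ref{thm:amalgfp}, the element $\gamma := E_\Nc(y_j) \in \Nc$ is also independent of $j$, where $E_\Nc : \Mcal_x \to \Nc$ is the $\phi$--preserving conditional expectation onto the tail algebra. Freeness over $\Bc$ gives $\phi((y_j - \beta)^*(y_k - \beta)) = 0$ for $j \ne k$, hence
\[
\frac{1}{N} \sum_{j=m+1}^{m+N} y_j \longrightarrow \beta \qquad \text{in } L^2(\Mcal,\phi)
\]
for every $m \ge 0$. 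Each $y_j$ with $j \ge m+1$ lies in $W^*(\Nc \cup \{x_j\}) \subseteq \Nc_m$, so $\beta\xi_\phi \in L^2(\Nc_m,\phi)$ for every $m$; since the Jones projections $e_{\Nc_m}$ decrease strongly to $e_\Nc$, we conclude $e_\Nc\beta\xi_\phi = \beta\xi_\phi$. On the other hand, applying $e_\Nc$ termwise to the averages and using $E_\Nc(y_j)\xi_\phi = \gamma\xi_\phi$ yields $e_\Nc\beta\xi_\phi = \gamma\xi_\phi$. Hence $\beta\xi_\phi = \gamma\xi_\phi$, and separability of $\xi_\phi$ forces $\beta = \gamma \in \Nc$.

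For $\Nc \subseteq \Bc_0$: by freeness of the $\Ac_i$ over $\Bc$ and iterative centering, every mixed moment $E(x_{i_1}^{k_1} \cdots x_{i_r}^{k_r})$ reduces by induction on $r$ to a polynomial expression in single-index moments of the form appearing in~\eqref{eq:Bc0}, whose coefficients are themselves shorter moments and hence lie in $\Bc_0$; therefore $E(\Mcal_x) \subseteq \Bc_0$, and since $\Nc \subseteq \Bc \cap \Mcal_x$ with $E|_\Nc = \mathrm{id}$, we get $\Nc = E(\Nc) \subseteq E(\Mcal_x) \subseteq \Bc_0$. The main obstacle is the identification $\beta = \gamma$ in the second step: neither $\beta \in \Nc$ nor the compatibility $E|_{\Mcal_x} = E_\Nc$ is apparent from the $\Bc$-valued amalgamated structure in isolation, and both the $L^2$-averaging over shifted copies and the tail conditional expectation $E_\Nc$ furnished by the noncommutative de Finetti theorem are essential.
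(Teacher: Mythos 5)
Your proof is correct in substance and its skeleton (the three inclusions $\Nc\subseteq\Bc$, $\Bc_0\subseteq\Nc$, $\Nc\subseteq\Bc_0$) matches the paper's, but the middle step takes a genuinely different route. The paper first proves the intermediate identity $\Nc=W^*(\{E(x_{i_1}\cdots x_{i_n})\})$ by Ces\`aro-averaging the \emph{shifted words} $x_{j_1+p}\cdots x_{j_N+p}$ over $p$, reading the orthogonality of the centered parts off the explicit Fock-type decomposition of $L^2(\Mcal,\phi)$, and only then deduces that $\Nc$ satisfies~\eqref{eq:Bc0}; you instead average the elements $y_j=b_0x_j^{k_1}b_1\cdots b_{n-1}x_j^{k_n}b_n$ directly over the free index $j$, getting the needed orthogonality straight from freeness over $\Bc$. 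This verifies~\eqref{eq:Bc0} for $\Nc$ in one stroke and is arguably cleaner, since the paper's passage from its identity~\eqref{eq:Exs=} to the $\Bc_0$-property for general coefficients $b_i\in\Nc$ is left rather terse. Your final step replaces the paper's operator-valued cumulant argument by direct iterative centering; that is the same computation in different clothing and is fine, provided you add the normality-of-$E$/Kaplansky remark needed to pass from polynomials to all of $W^*(\{x_j\mid j\ge1\})$.

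One point needs repair. You invoke $e_{\Nc_m}\downarrow e_{\Nc}$, i.e.\ $\bigcap_m\overline{\Nc_m\xi_\phi}=\overline{\Nc\xi_\phi}$, without justification; this is a nontrivial reverse-martingale fact (it is available from the K\"ostler reference \cite{K10} that the paper cites for the existence of $E_\Nc$, but you should say so). You can avoid it entirely: the averages $\frac1N\sum_{j=m+1}^{m+N}y_j$ are uniformly bounded in operator norm, so their $L^2$-convergence to $\beta\xi_\phi$ upgrades to strong convergence (test against the dense subspace $\Mcal'\xi_\phi$), whence $\beta\in\Nc_m$ for every $m$ and $\beta\in\bigcap_m\Nc_m=\Nc$ directly. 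This also renders the detour through $\gamma=E_\Nc(y_j)$ and the identification $\beta=\gamma$ unnecessary. (Minor: ``separability of $\xi_\phi$'' should read ``$\xi_\phi$ is separating''.)
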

\begin{proof}
If we consider the structure of $L^2(\Mcal,\phi)$, we easily see that the tail algebra $\Nc$
is a von Neumann
subalgebra of $\Bc$.
Indeed, from Voiculescu's free product construction (see~\cite{BD01} for more detailed discussion in the
case of von Neumann algebras),
\begin{equation}\label{eq:fpHil}
L^2(\Mcal,\phi)=L^2(\Bc,\phi)\oplus
\bigoplus_{\substack{n\ge1 \\ i_1,\ldots,i_n\ge1 \\ i_j\ne i_{j+1}}}
F_{i_1}\otimes_\Bc F_{i_2}\otimes_\Bc\cdots\otimes_\Bc F_{i_n}\otimes_{\pi_\phi}L^2(\Bc,\phi),
\end{equation}
where $F_i$ is the Hilbert $\Bc$-module $L^2(\Ac_i,E_i)\ominus\Bc$
and $\pi_\phi$ is the Gelfand--Naimark--Segal representation of $\Bc$ on $L^2(\Bc,\phi)$.
In particular, the image in $L^2(\Mcal,\phi)$ of the von Neumann algebra generated by $\{x_i\mid i\ge N\}$
is contained in
\[
L^2(\Bc,\phi)\oplus
\bigoplus_{\substack{n\ge1 \\ i_1,\ldots,i_n\ge N \\ i_j\ne i_{j+1}}}
F_{i_1}\otimes_\Bc F_{i_2}\otimes_\Bc\cdots\otimes_\Bc F_{i_n}\otimes_{\pi_\phi}L^2(\Bc,\phi)
\]
and the intersection of these spaces is $L^2(\Bc,\phi)$.
We must, therefore, have $\Nc\subseteq\Bc$.

Since $\Nc\subseteq\Bc\cap W^*(\{x_i\mid i\ge1\})$, we have
\begin{equation*}%\label{eq:EW}
\Nc\subseteq E\big(W^*(\{x_i\mid i\ge1\})\big).
\end{equation*}
Thus, we have
\begin{equation}\label{eq:Exs}
\Nc\subseteq W^*(\{E(x_{i_1}x_{i_2}\cdots x_{i_n})\mid n\ge1,\,i_1,\ldots,i_n\ge1\}).
\end{equation}
To see that we have equality in~\eqref{eq:Exs},
take $j_1,\ldots,j_N\ge1$ and let $M_1=\min(j_1,\ldots,j_N)$ and $M_2=\max(j_1,\ldots,j_N)$.
Then 
$x_{j_1}x_{j_2}\cdots x_{j_N}=b+y$,
where $b=E(x_{j_1}x_{j_2}\cdots x_{j_N})$ and
where the element $\yh$ of $L^2(\Mcal,\phi)$ corresponding to $y$ belongs to 
\[
\bigoplus_{\substack{1\le n\le N \\ M_1\le i_1,\ldots,i_n\le M_2 \\ i_j\ne i_{j+1}}}
F_{i_1}\otimes_\Bc F_{i_2}\otimes_\Bc\cdots\otimes_\Bc F_{i_n}\otimes_{\pi_\phi}L^2(\Bc,\phi).
\]
Since the sequence $(x_i)_{i\ge1}$ is quantum exchangeable, it is also exchangeable, i.e., the joint moments of this
sequence are invariant under arbitrary permutations of $\Nats$.
Thus, for each $p\ge0$ we have
$E(x_{j_1+p}x_{j_2+p}\cdots x_{j_N+p})=E(x_{j_1}x_{j_2}\cdots x_{j_N})$
and, therefore, 
$x_{j_1+p}x_{j_2+p}\cdots x_{j_N+p}=b+y_p$ where
\begin{equation}\label{eq:+p}
\yh_p\in\bigoplus_{\substack{1\le n\le N \\ M_1+p\le i_1,\ldots,i_n\le M_2+p \\ i_j\ne i_{j+1}}}
F_{i_1}\otimes_\Bc F_{i_2}\otimes_\Bc\cdots\otimes_\Bc F_{i_n}\otimes_{\pi_\phi}L^2(\Bc,\phi).
\end{equation}
Now since the subspaces~\eqref{eq:+p} corresponding to values of $p$ that differ by more than $M_2-M_1$
are orthogonal to each other,
we see that for each $q\in\Nats$, the ergodic averages
\[
\frac1K\sum_{p=q}^{q+K-1}x_{i_1+p}x_{i_2+p}\cdots x_{i_N+p}
\]
converge in $L^2(\Mcal,\phi)$ to 
$b$ as $K\to\infty$.
So $b\in W^*(\{x_i\mid i\ge q\})$ for all $q\ge1$.
Letting $q\to\infty$, we get $b=E(x_{j_1}x_{j_2}\cdots x_{j_N})\in\Nc$.
This proves
\begin{equation}\label{eq:Exs=}
\Nc=W^*(\{E(x_{i_1}x_{i_2}\cdots x_{i_n})\mid n\ge1,\,i_1,\ldots,i_n\ge1\}).
\end{equation}

It remains to show $\Nc=\Bc_0$.
Using~\eqref{eq:Exs=}, we have $E(b_0x^{k_1}b_1x^{k_2}\cdots b_{n-1}x^{k_n}b_n)\in\Nc$
whenever 
$n\in\Nats$, $k_1,\ldots,k_n\in\Nats$, $b_0,b_1,\ldots,b_n\in\Nc$ and $x=x_i$.
By the characterization of $\Bc_0$, this implies $\Bc_0\subseteq\Nc$.
So it suffices to see that we have $E(x_{i_1}x_{i_2}\cdots x_{i_n})\in\Bc_0$ for all $n\ge1$
and $i_1,\ldots,i_n\ge1$.
However, this follows by considering how mixed moments of the free variables $x_1,x_2,\ldots$
can be evaluated in terms the moments of the individual $x_i$.
Speicher's operator--valued cumulants~\cite{Sp98} can be used to give a careful proof of this fact.
Indeed, using the expression of the cumulants in terms of moments and the M\"obius function and using the
defining property~\eqref{eq:Bc0} of $\Bc_0$,
we see that for each individual $x=x_i$, the operator valued cumulant
$\kappa[xb_1,xb_2,\ldots,xb_{n-1},x]$ belongs to $\Bc_0$ for every $b_1,\ldots,b_{n-1}\in\Bc_0$.
Now using the moment cumulant formula for $E(x_{i_1}x_{i_2}\cdots x_{i_n})$ and the vanishing of mixed cumulants,
we obtain $E(x_{i_1}x_{i_2}\cdots x_{i_n})\in\Bc_0$.
\end{proof}

\begin{lemma}\label{lem:aA}
Let $\Nc$ be a countably generated von Neumann algebra
equipped with a normal faithful state $\phi$.
Then there is a von Neumann algebra $\Ac$ containing $\Nc$
as a unital von Neumann subalgebra
and possessing a $\phi$--preserving conditional expectation $E:\Ac\to\Nc$
onto $\Nc$,
and there is a self--adjoint element $a\in\Ac$ with the property
that $\{E(a^k)\mid k\in\Nats\}$ generates $\Nc$ as a von Neumann algebra.
\end{lemma}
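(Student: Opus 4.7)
The plan is to take $\Ac:=\Nc\bar\otimes L^\infty([0,1],dt)$ with the normal faithful state $\phi_\Ac:=\phi\otimes\int\cdot\,dt$ and conditional expectation $E:=\mathrm{id}_\Nc\otimes\int\cdot\,dt:\Ac\to\Nc$, which is $\phi_\Ac$--preserving (and restricts to $\phi$ on $\Nc\otimes 1\cong\Nc$), and then to build a single self--adjoint $a\in\Ac$ whose moments $E(a^k)$ generate $\Nc$. Fix a self--adjoint sequence $(n_k)_{k\ge1}$ in $\Nc$ with $\|n_k\|\le1$ generating $\Nc$ (available because $\Nc$ is countably generated), and pick real parameters, for instance $t_k:=2^{-k}$ and $c_k:=2^{-k-2}$, so that the closed intervals $J_k:=[t_k-c_k,t_k+c_k]\subset\Reals$ are pairwise disjoint with single cluster point $0$ and $K:=\{0\}\cup\bigcup_{k\ge1}J_k$ is compact. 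Also fix a Borel partition $(I_k)_{k\ge1}$ of $[0,1]$ with $\mu_k:=|I_k|>0$. Setting $a_k:=t_k+c_k n_k\in\Nc$ (so that $\sigma(a_k)\subseteq J_k$), define
\[
a\;:=\;\sum_{k\ge1}a_k\otimes\chi_{I_k}\in\Ac,
\]
which is bounded and self--adjoint. Because the $\chi_{I_k}$ are pairwise orthogonal, $a^m=\sum_k a_k^m\otimes\chi_{I_k}$ for every $m\ge0$, and therefore $E(a^m)=\sum_{k\ge1}\mu_k a_k^m\in\Nc$.

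The heart of the proof is then to extract each $n_\ell$ from $\{E(a^m):m\ge1\}$. The separation hypothesis on the $J_k$'s makes each $J_\ell$ both closed and open in $K$ (positive distance from every other $J_k$ and from $0$), so the piecewise function $\psi_\ell:K\to\Reals$ defined by $\psi_\ell(x)=(x-t_\ell)/c_\ell$ on $J_\ell$ and $\psi_\ell(x)=0$ otherwise belongs to $C(K)$. Weierstrass furnishes polynomials $p_n^{(\ell)}\to\psi_\ell$ uniformly on $K$, and continuous functional calculus in $\Nc$ then gives $p_n^{(\ell)}(a_k)\to\psi_\ell(a_k)$ in norm for every $k$, with $\psi_\ell(a_\ell)=n_\ell$ and $\psi_\ell(a_k)=0$ for $k\ne\ell$. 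Writing $p_n^{(\ell)}(x)=\sum_m\alpha_{n,m}^{(\ell)}x^m$ (finite sum),
\[
\sum_m\alpha_{n,m}^{(\ell)}E(a^m)\;=\;E\bigl(p_n^{(\ell)}(a)\bigr)\;=\;\sum_{k\ge1}\mu_k\,p_n^{(\ell)}(a_k)\;\xrightarrow[n\to\infty]{\|\cdot\|}\;\mu_\ell n_\ell,
\]
using the estimate $\bigl\|\sum_k\mu_k(p_n^{(\ell)}(a_k)-\psi_\ell(a_k))\bigr\|\le\|p_n^{(\ell)}-\psi_\ell\|_{C(K)}$ to control the sum termwise. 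Since $\mu_\ell>0$, this places $n_\ell$ in $W^*(\{E(a^m):m\ge1\})$ for every $\ell$, and because the $n_\ell$ generate $\Nc$ the required equality $W^*(\{E(a^m):m\ge1\})=\Nc$ follows.

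The main technical point is the geometric arrangement of the intervals $J_k$: they must be pairwise well--separated closed intervals clustering only at a point outside every $J_k$, so that each $\psi_\ell$ is genuinely continuous on the compact set $K$ and Weierstrass plus continuous functional calculus deliver the individual generators in operator norm. Once this arrangement is in place, the remaining ingredients -- existence and faithfulness of the tensor--product state and of $E$, the block--diagonal identity for $a^m$, and the norm estimate above -- are all routine, so this set--up is essentially the only real step of the argument.
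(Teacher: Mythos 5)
Your proof is correct, but it takes a genuinely different route from the paper's. The paper first reduces to a countable family of \emph{projections} $(p_i)$ generating $\Nc$, works in $\Ac=\bigoplus_i\Nc$ with a weighted-average conditional expectation, and sets $a=(\beta_i p_i)_i$ with $\beta_i=2^{-i}$; since $p_i^k=p_i$, the moments become $E(a^k)=\sum_i\alpha_i\beta_i^k p_i$, and the individual $p_i$ are recovered either by inverting a Vandermonde matrix (finite case) or by the successive norm limits $\lim_k 2^{jk}\bigl(E(a^k)-\sum_{i<j}\alpha_i 2^{-ik}p_i\bigr)=\alpha_j p_j$. You instead keep arbitrary self-adjoint contractive generators $n_k$, confine the spectrum of each block $a_k=t_k+c_k n_k$ to pairwise disjoint intervals $J_k$ clustering only at $0$, and recover $n_\ell$ by polynomially approximating the continuous function on $K=\{0\}\cup\bigcup_k J_k$ that is affine on $J_\ell$ and zero elsewhere; the ambient algebras are structurally analogous ($\Nc$ tensored with a commutative algebra carrying a faithful state), so the real divergence is in the extraction mechanism. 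Your argument checks out: the $J_k$ with $t_k=2^{-k}$, $c_k=2^{-k-2}$ are indeed pairwise disjoint and each is clopen in $K$, the functional-calculus bound $\|f(a_k)\|\le\|f\|_{C(K)}$ together with $\sum_k\mu_k=1$ justifies the termwise norm estimate, and the sums $\sum_k\mu_k a_k^m$ converge in norm. What your approach buys is avoiding the reduction to projections and treating all generators uniformly via Stone--Weierstrass; what the paper's buys is greater elementarity (explicit limits, no functional calculus) at the modest cost of the spectral-theorem preprocessing step.
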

\begin{proof}
One easily sees (using the spectral theorem)
that the von Neumann algebra $\Nc$ is also generated by a finite or countable collection $(p_i)_{i\in I}$
of projections.
Let $\Ac=\bigoplus_{i\in I}\Nc$, with $\Nc\subseteq\Ac$
identified with the constant sequences and with the conditional expectation $E$
given by some strictly positive weights $(\alpha_i)_{i\in I}$
that sum to $1$:
\[
E((x_i)_{i\in I})=\sum_i\alpha_ix_i.
\]
We let $a=(\beta_ip_i)_{i\in I}\in\Ac$ for some bounded family $\beta_i\in\Reals$; thus, we have
\[
E(a^k)=\sum_i\alpha_i\beta_i^kp_i.
\]

If $I$ is finite, then by choosing all the $\beta_i$ to be distinct, 
the determinant of the matrix $(\beta_i^j)_{i\in I,\,0\le j<|I|}$, being a Vandermonde
determinant,
is seen to be nonzero.
Thus, we recover $\{p_i\mid i\in I\}$ by taking linear combinations of 
$(E(a^k))_{0\le k<|I|}$.

Suppose $I$ is infinite, identify it with $\Nats_0$ and let $\beta_i=2^{-i}$.
Let $\Dc=W^*(\{E(a^k)\mid k\in\Nats\})$.
Then
\[
\lim_{k\to\infty}E(a^k)=\lim_{k\to\infty}\left(\alpha_0p_0+\sum_{i=1}^\infty\alpha_i2^{-ki}p_i\right)=\alpha_0p_0,
\]
where the convergence is in norm topology, and $p_0\in\Dc$.
Similarly,
\begin{gather*}
\lim_{k\to\infty}2^k(E(a^k)-\alpha_0p_0)=\alpha_1p_1, \\
\lim_{k\to\infty}2^{2k}(E(a^k)-\alpha_0p_0-2^{-1}\alpha_1p_1)=\alpha_2p_2
\end{gather*}
and so on.
Thus, we get $p_0,p_1,p_2,\ldots\in\Dc$ and $\Dc$ is all of $\Nc$.
\end{proof}

\begin{thm}\label{thm:givenN}
Let $\Nc$ be a countably generated von Neumann algebra with a
normal, faithful state $\phi$.
Then there is a quantum exchangeable sequence $x_1,x_2,\ldots$ in some
W$^*$--noncommutative probability space $(\Mcal,\phi_\Mcal)$
whose tail algebra is a copy of $\Nc$ in $\Mcal$, with the restriction of $\phi_\Mcal$
to $\Nc$ being $\phi$.
\end{thm}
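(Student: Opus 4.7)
The plan is to build $(\Mcal,\phi_\Mcal)$ as an infinite amalgamated free product of copies of the pair $(\Ac,E)$ produced by Lemma~\ref{lem:aA}, and then read off the tail algebra using Proposition~\ref{prop:tail}.

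First, I invoke Lemma~\ref{lem:aA} to obtain a W$^*$-algebra $\Ac\supseteq\Nc$, a $\phi$-preserving normal conditional expectation $E:\Ac\to\Nc$, and a self-adjoint $a\in\Ac$ with the property that $\{E(a^k)\mid k\in\Nats\}$ generates $\Nc$ as a von Neumann algebra. For each $i\in\Nats$ let $(\Ac_i,E_i)$ be a copy of $(\Ac,E)$, and form the amalgamated free product von Neumann algebra
\[
(\Mcal,E_\Mcal)=(*_\Nc)_{i=1}^\infty(\Ac_i,E_i),
\]
equipped with the state $\phi_\Mcal=\phi\circ E_\Mcal$. Let $x_i\in\Ac_i$ denote the image of $a$ under the identification $\Ac_i\cong\Ac$. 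By construction the restriction of $\phi_\Mcal$ to $\Nc$ is $\phi$.

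Next, I verify quantum exchangeability of $(x_i)_{i=1}^\infty$ by applying Proposition~\ref{prop:free} with $\Bc=\Nc$. Because each $\Ac_i$ is a literal copy of $(\Ac,E)$ and $x_i$ corresponds to $a$ under the identification, the moments $E_i(x_ib_1x_ib_2\cdots b_{n-1}x_i)$ for $b_1,\ldots,b_{n-1}\in\Nc$ are independent of $i$. Thus $(x_i)_{i=1}^\infty$ is quantum exchangeable in $(\Mcal,\phi_\Mcal)$.

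Finally, I identify the tail algebra. By Proposition~\ref{prop:tail}, the tail algebra $\Nc_{\text{tail}}$ is the smallest unital von Neumann subalgebra $\Bc_0\subseteq\Nc$ closed under the operation $(b_0,\ldots,b_n)\mapsto E(b_0x^{k_1}b_1\cdots b_{n-1}x^{k_n}b_n)$ with $x=x_1$. Since $\Nc$ itself clearly satisfies this closure property, $\Nc_{\text{tail}}\subseteq\Nc$. Conversely, taking $n=1$, $k_1=k$, and $b_0=b_1=1$ shows $E(x^k)=E(a^k)\in\Bc_0=\Nc_{\text{tail}}$ for every $k\in\Nats$; by the defining property of $a$ from Lemma~\ref{lem:aA}, these elements generate $\Nc$, so $\Nc\subseteq\Nc_{\text{tail}}$. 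Hence $\Nc_{\text{tail}}=\Nc$, as desired.

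The only technical point that requires care is that the state $\phi_\Mcal$ on the amalgamated free product is normal and faithful; this is standard since $E_i$ is normal and faithful and $\phi$ is a normal faithful state on the amalgamation subalgebra $\Nc$, so $\phi_\Mcal$ is a normal faithful state on $\Mcal$. Everything else is a direct assembly of Lemma~\ref{lem:aA}, Proposition~\ref{prop:free} and Proposition~\ref{prop:tail}, so I anticipate no further obstacles.
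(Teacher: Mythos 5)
Your proposal is correct and follows essentially the same route as the paper: Lemma~\ref{lem:aA} to produce $(\Ac,E,a)$, the amalgamated free product of copies of $(\Ac,E)$ with Proposition~\ref{prop:free} giving quantum exchangeability, and Proposition~\ref{prop:tail} combined with the generating property of $\{E(a^k)\mid k\in\Nats\}$ to identify the tail algebra as $\Nc$. Your explicit two-inclusion argument for $\Nc_{\text{tail}}=\Nc$ merely spells out what the paper leaves implicit.
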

\begin{proof}
Let $\Ac\supseteq\Nc$ and $a\in\Ac$ be as from Lemma~\ref{lem:aA}.
For each $j\in\Nats$, let $\Ac_j$ be a copy of $\Ac$ with the $\phi$--preserving
conditional expectation onto
$\Nc$ denoted by $E_j$.
Let
\[
(\Mcal,E)=(*_\Nc)_{j=1}^\infty(\Ac_j,E_j)
\]
be their amalgamated free product of von Neumann algebra and, of course, let $\phi_\Mcal=\phi\circ E$.
Let $x_j$ be the copy of $a$ in the $j$th copy $\Ac_j$ of $\Ac$.
By Proposition~\ref{prop:free}, $(x_j)_{j=1}^\infty$ is a quantum exchangeable sequence.
By Lemma~\ref{lem:aA}, the set
\begin{equation}\label{eq:amomset}
\{E(x_j^k)\mid k\in\Nats\}      % correction of \{E(a_j^k)\mid k\in\Nats\}
\end{equation}
generates all of $\Nc$.
By Proposition~\ref{prop:tail}, the tail algebra of the sequence $(x_i)_{i\in I}$ is equal to $\Nc$.
\end{proof}

\section{Free product states}
\label{sec:fps}

Given a C$^*$--algebra $A$ with state $\phi$ and with self--adjoint elements $x_i\in A$, ($i\in I$),
we wish to regard the variables $x_i$ abstractly, independently of their realization in $A$.
Consider the $*$--algebra $\Cpx\langle X\rangle=\Cpx\langle X_i\mid i\in I\rangle$
of polynomials in
noncommuting variables $X_i$, with the involution defined so that $X_i^*=X_i$,
and consider the
unital $*$--algebra representation $\alpha:\Cpx\langle X\rangle\to A$ that sends $X_i$ to $x_i$.

\begin{defi}\label{def:fps}
By a {\em free product state} on $\Cpx\langle X\rangle$, we will mean a functional $\psi$ of
$\Cpx\langle X\rangle$
such that $\psi(1)=1$ and the variables $(X_i)_{i\in I}$ are free with respect to $\psi$.
If, in addition, for all $k$ the moments $\psi(X_i^k)$ are independent of $i$, then we say $\psi$
is an {\em equidistributed free product state} on $\Cpx\langle X\rangle$.
We will say that a linear functional $\phi$ of $\Cpx\langle X\rangle$ is 
{\em a limit of convex combinations of uniformly bounded free product states}
(respectively, of uniformly bounded equidistributed
free product states)
if for some choice of constants $C_i>0$,
the functional $\phi$ is the limit in the topology of pointwise convergence on $\Cpx\langle X\rangle$
of convex combinations of
free product states (respectively, of equidistributed free product states) $\psi$ on $\Cpx\langle X\rangle$
that satisfy $|\psi(X_i^k)|\le C_i^k$ for all $i\in I$ and $k\in\Nats$.
\end{defi}

We will need a few lemmas about freeness.
Here is an easy result about freeness over a subalgebra of the center.
\begin{lemma}\label{lem:Dcenter}
Let $D=C(\Omega)$ be a commutative, unital C$^*$--algebra and
let $A$ be a unital C$^*$--algebra with $D$ embedded as a unital C$^*$--subalgebra of the center of $A$,
and suppose $E:A\to D$ is a conditional expectation.
Let $I$ be a set and suppose for every $i\in I$  there is a C$^*$--algebra $A_i\in I$ with $D\subseteq A_i$.
For each $\omega\in \Omega$, consider the character $\ev_\omega:f\mapsto f(\omega)$ of $D$
and let $\rho_\omega=\ev_\omega\circ E$.
Then the family $(A_i)_{i\in I}$ is free (over $D$) with respect to $E$ if and only if for every $\omega\in\Omega$,
the family $(A_i)_{i\in I}$ is free (over $\Cpx$) with respect to $\rho_\omega$.
\end{lemma}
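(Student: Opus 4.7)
The plan is to prove both directions by reducing statements about $E$ to pointwise statements about the family of states $\rho_\omega$, exploiting the identity $E(a)(\omega)=\rho_\omega(a)$ and the fact that $D$ sits in the center of $A$.

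For the easier direction ($\Leftarrow$), I would observe that if $a_j\in A_{i_j}$ satisfy $i_1\ne i_2\ne\cdots\ne i_n$ and $E(a_j)=0$ in $D$, then $\rho_\omega(a_j)=E(a_j)(\omega)=0$ for every $\omega\in\Omega$. Freeness of $(A_i)_{i\in I}$ with respect to each $\rho_\omega$ then gives $\rho_\omega(a_1\cdots a_n)=0$ for every $\omega$, i.e.\ $E(a_1\cdots a_n)$ is the zero element of $C(\Omega)=D$. So $(A_i)_{i\in I}$ is free over $D$ with respect to $E$.

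For the forward direction ($\Rightarrow$), fix $\omega\in\Omega$ and take $a_j\in A_{i_j}$ with $i_1\ne i_2\ne\cdots\ne i_n$ and $\rho_\omega(a_j)=0$; note that $E(a_j)$ itself need not vanish, only its value at $\omega$. I would decompose $a_j=a_j'+d_j$ with $d_j:=E(a_j)\in D$ and $a_j':=a_j-d_j\in A_{i_j}$, so that $E(a_j')=0$. Since every $d_k$ is central, expanding the product and pulling all $d$-factors to the left yields
\[
a_1a_2\cdots a_n=\sum_{S\subseteq\{1,\ldots,n\}}\Bigl(\prod_{k\in S}d_k\Bigr)\,w_S,
\]
where $w_S$ is the product of those $a_k'$ with $k\notin S$, taken in the induced order. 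Applying $\rho_\omega$ and using $D$-linearity of $E$, the contribution of each $S$ is $\bigl(\prod_{k\in S}\rho_\omega(a_k)\bigr)\cdot\rho_\omega(w_S)$, which vanishes for every nonempty $S$ by the hypothesis $\rho_\omega(a_k)=0$. Only $S=\emptyset$ survives, contributing $\rho_\omega(a_1'a_2'\cdots a_n')$; since $i_1\ne\cdots\ne i_n$ and $E(a_k')=0$, freeness over $D$ gives $E(a_1'\cdots a_n')=0$ in $D$, hence $\rho_\omega(a_1'\cdots a_n')=0$. Thus $\rho_\omega(a_1\cdots a_n)=0$.

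The only real subtlety—and what I regard as the main point to get right—is the bookkeeping in the expansion: one must use centrality of $D$ carefully so that rearranging produces no extra terms, and must not worry that the remaining indices in $w_S$ for nonempty $S$ may fail to alternate, since those terms are killed for free by the factor $\prod_{k\in S}\rho_\omega(a_k)=0$.
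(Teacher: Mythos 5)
Your proof is correct, and the forward direction takes a genuinely different route from the paper's. The backward direction is identical to the paper's argument. For the forward direction, the paper fixes $\omega$, invokes Tietze's extension theorem to produce $f\in C(\Omega)$ with $f(\omega)=1$, $\|f\|_\infty=1$ and $\|E(a_j)f\|<\eps$, observes that $E(fa_j-E(a_j)f)=0$ exactly so that freeness over $D$ applies to the perturbed word $(fa_1-f_1)\cdots(fa_n-f_n)$, and then lets $\eps\to0$ after estimating the cross terms. You instead center each $a_j$ with respect to $E$ itself (rather than with respect to $\rho_\omega$), expand the product exactly, and use centrality of $D$ together with the $D$--bimodule property $E(da)=dE(a)$ to see that every term indexed by a nonempty $S$ carries the scalar factor $\prod_{k\in S}\rho_\omega(a_k)=0$, leaving only the alternating word $a_1'\cdots a_n'$, which is killed by freeness over $D$. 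Your remark that $w_S$ need not alternate for $S\ne\emptyset$ is exactly the point to flag, and you handle it correctly; note also that both proofs rely on the same structural inputs (centrality of $D$ and the bimodule property of the conditional expectation, the latter being automatic by Tomiyama's theorem), so your version assumes nothing extra. What your approach buys is an exact, purely algebraic argument with no approximation; what the paper's buys is essentially nothing beyond familiarity, so yours is arguably the cleaner of the two.
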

\begin{proof}
First suppose the family is free with respect to $E$, fix $\omega\in\Omega$ and let us show freeness with respect to $\rho_\omega$.
Suppose $a_j\in A_{i_j}\cap\ker\rho_\omega$ for $j=1,\ldots,n$ and $i_j\ne i_{j+1}$ for all $1\le j<n$.
We must show $\rho_\omega(a_1a_2\cdots a_n)=0$.
Let $\eps>0$.
Since $E(a_j)\in C(\Omega)$ vanishes at $\omega$, by Tietze's extension theorem there is $f\in C(\Omega)$
such that $f(\omega)=1$, $\|f\|_\infty=1$ and $\|E(a_j)f\|<\eps$ for all $j$.
Let $f_j=E(a_j)f$.
Then freeness with respect to $E$ implies $E((fa_1-f_1)(fa_2-f_2)\cdots (fa_n-f_n))=0$, so we have
\[
\rho_\omega((fa_1-f_1)(fa_2-f_2)\cdots (fa_n-f_n))=0.
\]
But if $M=1+\eps+\max_j\|a_j\|$, then
\begin{multline*}
|\rho_\omega((fa_1)(fa_2)\cdots(fa_n))|= \\
=|\rho_\omega((fa_1)(fa_2)\cdots(fa_n))-\rho_\omega((fa_1-f_1)\cdots(fa_n-f_n))|\le nM^{n-1}\eps.
\end{multline*}
However, we have
$\rho_\omega(a_1a_2\cdots a_n)=\rho_\omega((f^n)a_1a_2\cdots a_n)=\rho_\omega((fa_1)(fa_2)\cdots(fa_n))$,
so letting $\eps\to0$ finishes the proof of freeness with respect to $\rho_\omega$.

Now suppose the family is free with respect to $\rho_\omega$ for all $\omega$ and let us show freeness with respect to $E$.
Suppose $a_i\in A_{i_j}\cap\ker E$ for $i_j$ as above
and let $d=E(a_1a_2\cdots a_n)$.
For every $\omega$ and every $j$, we have $\rho_\omega(a_j)=0$, so by hypothesis, $d(\omega)=\rho_\omega(a_1a_2\cdots a_n)=0$.
So $d=0$.
\end{proof}

Here is an easy result about conditional expectations and Gelfand--Naimark--Segal (GNS) constructions,
whose proof we include for convenience.
\begin{lemma}\label{lem:Ephi}
Let $A$ be a unital C$^*$--algebra and $B\subseteq A$ a unital C$^*$--subalgebra.
Suppose $E:A\to B$ is a conditional expectation onto $B$.
Suppose $\phi$ is a state on $A$ such that $\phi\circ E=\phi$.
Let $(\pi_\phi,\HEu_\phi)$ be the GNS representation of $A$ arising from $\phi$
and let $a\mapsto\ah$ denote the linear mapping $A\to\HEu_\phi$ arising in the GNS construction.
\begin{enumerate}[(i)]
\item Then there is a self--adjoint projection $P_\phi$ on $\HEu_\phi$ such that $E(x)\hat{\;}=P_\phi\xh$ and
\begin{equation}\label{eq:EP}
\pi_\phi(E(x))P_\phi=P_\phi\pi_\phi(x)P_\phi
\end{equation}
for all $x\in A$;
\item
If $B$ lies in the center of $A$, then
there is a conditional expectation $E_\phi:\pi_\phi(A)\to\pi_\phi(B)$ satisfying $E_\phi(\pi_\phi(x))=\pi_\phi(E(x))$ for all $x\in A$.
\end{enumerate}
\end{lemma}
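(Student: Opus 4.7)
The plan is to build the projection $P_\phi$ explicitly as the orthogonal projection onto the closure of $\{\bh \mid b\in B\}$ in $\HEu_\phi$, and then obtain the conditional expectation in (ii) by letting $E$ descend through the GNS construction.

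For (i), since $E(x)\in B$ the vector $\widehat{E(x)}$ lies in the range of this $P_\phi$, and for every $b\in B$ the relation $\phi\circ E=\phi$ combined with the $B$-bimodule property of $E$ gives
\begin{equation*}
\langle \xh-\widehat{E(x)},\bh\rangle = \phi(b^*(x-E(x))) = \phi(E(b^*x)-b^*E(x)) = 0,
\end{equation*}
so $P_\phi\xh=\widehat{E(x)}$. For the identity $\pi_\phi(E(x))P_\phi=P_\phi\pi_\phi(x)P_\phi$, I would evaluate both sides on a vector $\bh$ with $b\in B$: the left side is $\widehat{E(x)b}$, while the right side is $P_\phi\widehat{xb}=\widehat{E(xb)}=\widehat{E(x)b}$ by the module property; on the orthogonal complement of the range of $P_\phi$ both sides vanish because each carries $P_\phi$ on the right.

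For (ii), I would set $E_\phi(\pi_\phi(x)):=\pi_\phi(E(x))$ and reduce the question to well-definedness: if $\pi_\phi(x)=0$, must $\pi_\phi(E(x))=0$? Applying (i) to the cyclic vector $\hat 1$,
\begin{equation*}
\pi_\phi(E(x))\hat 1 = \widehat{E(x)} = P_\phi\xh = 0,
\end{equation*}
since $\xh=\pi_\phi(x)\hat 1=0$. Centrality of $B$ in $A$ gives $\pi_\phi(E(x))\pi_\phi(a)=\pi_\phi(a)\pi_\phi(E(x))$ for every $a\in A$, so
\begin{equation*}
\pi_\phi(E(x))\pi_\phi(a)\hat 1 = \pi_\phi(a)\pi_\phi(E(x))\hat 1 = 0,
\end{equation*}
and cyclicity of $\hat 1$ for $\pi_\phi(A)$ forces $\pi_\phi(E(x))=0$ on all of $\HEu_\phi$. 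Once $E_\phi$ is well defined, the conditional expectation axioms (unitality, positivity, idempotence, and the $\pi_\phi(B)$-bimodule property) transfer routinely from $E$ through the $*$-homomorphism $\pi_\phi$.

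The main obstacle is precisely the well-definedness step in (ii); this is the sole place where centrality is used. Without it, knowing only that $\pi_\phi(E(x))\hat 1 = 0$ cannot be propagated to the rest of $\HEu_\phi$, and the candidate formula need not define an operator on $\pi_\phi(A)$. Part (i) by contrast is essentially the standard identification of the Jones-type projection associated with $E$ and should be straightforward.
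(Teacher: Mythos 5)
Your proof is correct. Part (i) is essentially the paper's argument: the same orthogonality computation identifies $P_\phi$ as the projection onto $\overline{\{\bh\mid b\in B\}}$ with $P_\phi\xh=E(x)\hat{\;}$, and the same module-property calculation gives \eqref{eq:EP} (the paper evaluates on general $\ah$, you on $\bh$ plus density; immaterial). In part (ii) you take a genuinely different route at the key step. The paper never phrases the issue as well-definedness: it uses centrality in a norm computation, $\|\pi_\phi(b)\ah\|_\phi=\|\pi_\phi(b)(E(a^*a)^{1/2})\hat{\;}\|_\phi$, to show that $\pi_\phi(b)\mapsto\pi_\phi(b)P_\phi$ is isometric on $\pi_\phi(B)$, whence $\|\pi_\phi(E(x))\|=\|P_\phi\pi_\phi(x)P_\phi\|\le\|\pi_\phi(x)\|$; this yields well-definedness and contractivity simultaneously, and the conditional-expectation property then comes from Tomiyama's theorem (contractive idempotent onto a C$^*$--subalgebra). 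You instead deduce well-definedness qualitatively: $\pi_\phi(x)=0$ forces $\pi_\phi(E(x))\hat1=P_\phi\xh=0$, and centrality plus cyclicity of $\hat 1$ propagates this to all of $\HEu_\phi$. That is a clean and more elementary argument; the price is that you must then verify the conditional-expectation axioms by hand, which does work (positivity because every positive element of $\pi_\phi(A)$ is $\pi_\phi(y^*y)$, the bimodule property from that of $E$, and contractivity from unitality plus positivity), though your ``transfer routinely'' compresses the one point worth a line, namely that positivity itself requires the well-definedness you just established, since $E(x)$ need not be positive when only $\pi_\phi(x)$ is. Both proofs use centrality exactly once, as you note.
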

\begin{proof}
If $b\in B$, $x\in A$ and $E(x)=0$, then
\[
\langle\xh,\bh\rangle_\phi:=\phi(b^*x)=\phi(E(b^*x))=\phi(b^*E(x))=0.
\]
Thus, given $a\in A$ and writing $a=E(a)+(a-E(a))$ we get for the norm in $\HEu_\phi$
\[
\|\ah\|_\phi^2=\|E(a)\hat{\;}\|_\phi^2+\|(a-E(a))\hat{\;}\|_\phi^2
\]
and the map $\ah\mapsto E(a)\hat{\;}$ is an idempotent, linear map that is contractive with respect to $\|\cdot\|_\phi$
and, hence, extends to a self--adjoint projection $P_\phi$ from $\HEu_\phi$ onto $\overline{\{\bh\mid b\in B\}}$.
For $a,x\in A$ we have
\[
\pi_\phi(E(x))P_\phi\ah=(E(x)E(a))\hat{\;}=E(xE(a))\hat{\;}=P_\phi(xE(a))\hat{\;}=P_\phi\pi_\phi(x)P_\phi\ah,
\]
which proves~\eqref{eq:EP}.

For~(ii), assume $B$ is in the center of $A$.
Note that if $b\in B$, then for every $a\in A$ we have
\begin{align*}
\|\pi_\phi(b)\ah\|_\phi^2&=\phi(a^*b^*ba)=\phi(E(a^*b^*ba))=\phi(E(a^*a)b^*b)= \\
&=\phi(b^*E(a^*a)b)=\|\pi_\phi(b)(E(a^*a)^{1/2})\hat{\;}\|_\phi^2,
\end{align*}
so if $\pi_\phi(b)P_\phi=0$, then $\pi_\phi(b)=0$.
Hence, the $*$--homomorphism $\pi_\phi(B)\ni\pi_\phi(b)\mapsto\pi_\phi(b)P_\phi$ is isometric.
Thus, we have, for $x\in A$,
\[
\|\pi_\phi(E(x))\|=\|\pi_\phi(E(x))P_\phi\|=\|P_\phi\pi_\phi(x)P_\phi\|\le\|\pi_\phi(x)\|
\]
and $\pi_\phi(x)\mapsto\pi_\phi(E(x))$ is a contractive, linear, idempotent map from $\pi_\phi(A)$ onto $\pi_\phi(B)$;
this is the desired conditional expectation $E_\phi$.
\end{proof}

\begin{remark}
In the above lemma, it is not true that faithfulness of $E$ implies faithfulness of $E_\phi$.
For example, let $A=M_2(C([0,1])$, let $B$ be the center of $A$, identified with $C([0,1])$ and
let $E:A\to B$ be the conditional expectation given by,
for $a=(a_{ij})_{1\le i,j\le 2}\in A$
with $a_{ij}\in C([0,1])$,
\[
E(a)(t)=ta_{11}(t)+(1-t)a_{22}(t),\qquad(t\in[0,1]).
\]
Then $E$ is faithful.
Letting $\phi$ be the state on $A$ determined by $\phi=\phi\circ E$ and $\phi(b)=b(0)$ for $b\in B$, we find that
$E_\phi$ is the state $M_2(\Cpx)\to\Cpx$ sending $\left(\begin{smallmatrix}1&0\\0&0\end{smallmatrix}\right)$ to $0$, so is not faithful.
\end{remark}

\begin{lemma}\label{lem:quotientfree}
Let $A$ be a unital C$^*$--algebra and $B$ a unital C$^*$--subalgebra of the center of $A$ with a conditional expectation $E:A\to B$.
Let $I$ be a set and suppose for every $i\in I$, $A_i$ is a C$^*$--subalgebra of $A$ that contains $B$, and the family $(A_i)_{i\in I}$
is free (over $B$) with respect to $E$.
Let $\phi$ be a state on $A$ satisfying $\phi\circ E=\phi$ and let $E_\phi:\pi_\phi(A)\to\pi_\phi(B)$ be the conditional 
expectation from Lemma~\ref{lem:Ephi}.
Then the family $(\pi_\phi(A_i))_{i\in I}$ is free (over $\pi_\phi(B)$) with respect to $E_\phi$.
\end{lemma}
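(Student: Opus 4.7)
The plan is to verify freeness directly from the definition, by reducing a centered alternating product in the quotient to a centered alternating product in $A$, and using the hypothesized freeness over $B$.

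First, I take elements $y_1,\ldots,y_n$ with $y_j\in\pi_\phi(A_{i_j})$, satisfying $i_j\ne i_{j+1}$ and $E_\phi(y_j)=0$, and aim to show $E_\phi(y_1y_2\cdots y_n)=0$. Write $y_j=\pi_\phi(x_j)$ for some $x_j\in A_{i_j}$. By Lemma~\ref{lem:Ephi}(ii), the hypothesis $E_\phi(y_j)=0$ amounts to $\pi_\phi(E(x_j))=0$.

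The key observation is that since $B\subseteq A_{i_j}$, the element $\tilde x_j:=x_j-E(x_j)$ also lies in $A_{i_j}$; it satisfies $E(\tilde x_j)=0$ and, crucially, $\pi_\phi(\tilde x_j)=\pi_\phi(x_j)=y_j$ because $\pi_\phi(E(x_j))=0$. Thus freeness of $(A_i)_{i\in I}$ over $B$ with respect to $E$ yields
\[
E(\tilde x_1\tilde x_2\cdots\tilde x_n)=0.
\]
Applying $\pi_\phi$ and using the intertwining relation $E_\phi\circ\pi_\phi=\pi_\phi\circ E$ from Lemma~\ref{lem:Ephi}(ii), we conclude
\[
E_\phi(y_1y_2\cdots y_n)=E_\phi\bigl(\pi_\phi(\tilde x_1\tilde x_2\cdots\tilde x_n)\bigr)=\pi_\phi\bigl(E(\tilde x_1\cdots\tilde x_n)\bigr)=0,
\]
which is the required freeness of $(\pi_\phi(A_i))_{i\in I}$ over $\pi_\phi(B)$ with respect to $E_\phi$.

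There is essentially no obstacle here: the proof works because $B$ sits inside each $A_i$, so the centering operation $x_j\mapsto x_j-E(x_j)$ keeps us in $A_{i_j}$, and because the intertwining property of $E_\phi$ with $\pi_\phi$ from Lemma~\ref{lem:Ephi}(ii) lets us transport freeness across the GNS quotient. The centrality of $B$ was already used in establishing the existence of $E_\phi$ and plays no further role in the present argument.
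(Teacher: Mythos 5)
Your proof is correct, but it takes a genuinely different and more direct route than the paper. The paper first identifies $B=C(X)$ and $\pi_\phi(B)=C(Y)$ for a closed subset $Y\subseteq X$, then invokes Lemma~\ref{lem:Dcenter} twice: once to reduce freeness over $\pi_\phi(B)$ to scalar-valued freeness with respect to the states $\ev_y\circ E_\phi$ for $y\in Y$, and once to deduce scalar-valued freeness of $(A_i)_{i\in I}$ with respect to $\ev_y^{(X)}\circ E$ from the hypothesis; the two are then linked by the identity $\ev_y\circ E_\phi\circ\pi_\phi=\ev_y^{(X)}\circ E$. You instead verify the definition of amalgamated freeness directly: you lift a centered alternating word from $\pi_\phi(A)$ to a centered alternating word in $A$ (using $B\subseteq A_{i_j}$ to keep $x_j-E(x_j)$ inside $A_{i_j}$, and the well-definedness of $E_\phi$ to see that the lift can be taken centered without changing its image), apply freeness upstairs, and push back down via $E_\phi\circ\pi_\phi=\pi_\phi\circ E$. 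Your argument is shorter, avoids the Gelfand picture entirely, and — as you correctly observe — uses centrality of $B$ only through the existence of the intertwining expectation $E_\phi$ from Lemma~\ref{lem:Ephi}(ii), so it would prove the transfer of freeness in any setting where such an $E_\phi$ exists. What the paper's route buys is mainly expository uniformity: Lemma~\ref{lem:Dcenter} is the workhorse elsewhere (notably in Proposition~\ref{prop:centraltail}), so reducing to pointwise scalar freeness keeps everything in the same framework, but it is not logically necessary here.
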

\begin{proof}
We have $B=C(X)$ for some compact Hausdorff space $X$, and $\pi_\phi(B)=C(Y)$ for a closed subspace $Y$ of $X$,
where the $*$--homomorphism $\pi_\phi\restrict_B:C(X)\to C(Y)$ sends a function to its restriction to $Y$.
For $y\in Y$, let $\ev_y:\pi_\phi(B)\to\Cpx$  and $\ev_y^{(X)}:B\to\Cpx$ be the homomorphisms of evaluation at $y$.
By Lemma~\ref{lem:Dcenter}, it will suffice to show that for every $y\in Y$, the family
$(\pi_\phi(A_i))_{i\in I}$ is free (over $\Cpx$) with respect to $\ev_y\circ E_\phi$.
However, we have 
\[
\ev_y\circ E_\phi\circ\pi_\phi=\ev^{(X)}_y\circ E.
\]
From Lemma~\ref{lem:Dcenter}, we have freeness of $(A_i)_{i\in I}$ with respect to $\ev^{(X)}_y\circ E$, and from this
follows the freeness of
$(\pi_\phi(A_i))_{i\in I}$ with respect to $\ev_y\circ E_\phi$.
\end{proof}

The following result shows that a state that is a limit of convex combinations of uniformly bounded free product
states always arises from the situation of a free product with amalgamation over a subalgebra of the center.
\begin{prop}\label{prop:limfreeprodst}
Let $A$ be a C$^*$--algebra with faithful state $\phi$.
Suppose $x_i\in A$ ($i\in I$), for $I$ countable,
are self--adjoint elements that together generate $A$ as a C$^*$--algebra
and so that $\phit:=\phi\circ\alpha$
is a limit of convex combinations of uniformly bounded free product states, % with respect to $(x_i)_{i\in I}$,
where $\alpha:\Cpx\langle X\rangle\to A$
is the $*$--homomorphism given by $X_i\mapsto x_i$.
Then there is a unital C$^*$--algebra $B$ with a unital C$^*$--subalgebra $D\subseteq Z(B)$
of the center of $B$ and with a faithful conditional expectation $E:B\to D$, 
and there is a unital $*$--homomorphism $\pi:A\to B$
and a faithful tracial state $\sigma=\sigma\circ E$ on $B$ so that
$\phi=\sigma\circ\pi$ and the family $(\pi(x_i))_{i\in I}$ is free with respect to $E$.

Furthermore, if $\phit$ is a limit of convex combinations
of uniformly bounded equidistributed free product states with respect to $(x_i)_{i\in I}$,
then $E:B\to D$ and $\pi:A\to B$
may be chosen so that for all $n$ and all $d_0,\ldots,d_n\in D$, the corresponding moment
$E(d_0\pi(x_i)d_1\cdots\pi(x_i)d_n)$
of the variable $\pi(x_i)$ is independent of $i$.
\end{prop}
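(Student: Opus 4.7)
The plan is to realize $\phit$ as an integral over the compact set of free product states and then build $B$ as the corresponding direct integral. Let $K$ denote the set of (positive) free product states $\psi$ on $\Cpx\langle X\rangle$ satisfying $|\psi(X_i^k)|\le C_i^k$ for all $i\in I$, $k\in\Nats$. Freeness and the moment bounds are pointwise algebraic conditions, so $K$ is closed and (by Tychonoff) compact in the topology of pointwise convergence. The hypothesis places $\phit$ in the closed convex hull of $K$; viewing the approximating convex combinations as finitely supported Borel probabilities on $K$ and extracting a weak-$*$ convergent subnet (Banach--Alaoglu in $C(K)^*$) yields a Borel probability measure $\mu$ on $K$ with
\[
\phit(p)=\int_K \psi(p)\,d\mu(\psi)\qquad(p\in\Cpx\langle X\rangle).
\]

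For each $\psi\in K$, the restriction $\psi|_{\Cpx[X_i]}$ is the moment functional of a probability measure on $[-C_i,C_i]$, so in the GNS representation $(\pi_\psi,\HEu_\psi,\hat 1_\psi)$ the operator $\pi_\psi(X_i)$ is self-adjoint with $\|\pi_\psi(X_i)\|\le C_i$. Each marginal $\psi|_{\Cpx[X_i]}$ is trivially tracial, and a free product state over tracial marginals is tracial, so $\psi$ is tracial on $\Cpx\langle X\rangle$; thus $M_\psi:=\pi_\psi(\Cpx\langle X\rangle)''$ is a finite von Neumann algebra with faithful normal trace $\tau_\psi(T)=\langle T\hat 1_\psi,\hat 1_\psi\rangle_\psi$. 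The continuity of $\psi\mapsto\psi(q^*p)$ makes $\{\HEu_\psi\}_{\psi\in K}$ into a measurable field over $(K,\mu)$. Form $B:=\int^\oplus_K M_\psi\,d\mu$ acting on $\int^\oplus_K \HEu_\psi\,d\mu$, and let $D:=L^\infty(K,\mu)$ act diagonally; then $D\subseteq Z(B)$, the fiberwise trace $E(T)(\psi):=\tau_\psi(T_\psi)$ is a faithful normal conditional expectation $E\colon B\to D$, and $\sigma:=(\int\,\cdot\,d\mu)\circ E$ is a faithful normal tracial state on $B$.

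Set $\pi(x_i):=\int^\oplus_K\pi_\psi(X_i)\,d\mu(\psi)\in B$, which gives a unital $*$-homomorphism from $\Cpx\langle X\rangle$ to $B$. To see it descends through $\alpha$ to all of $A$, note that $p\in\ker\alpha$ implies $\phit(q^*p^*pq)=0$ for every $q\in\Cpx\langle X\rangle$, whence $\psi(q^*p^*pq)=0$ for $\mu$-a.e.\ $\psi$; since $I$ (and hence $\Cpx\langle X\rangle$) is countable, a single conull set works uniformly in $q$, so $\pi_\psi(p)=0$ a.e.\ and $\pi(p)=0$ in $B$. The induced $\pi\colon A\to B$ is unital and norm-continuous, and $\sigma\circ\pi=\phi$ by construction. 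Freeness of $(\pi(x_i))_{i\in I}$ over $D$ with respect to $E$ is then verified fiberwise: for alternating centered elements $T_j\in B_{i_j}\cap\ker E$, where $B_{i}:=W^*(D\cup\{\pi(x_{i})\})$, the vanishing $E(T_j)=0$ says $\tau_\psi(T_{j,\psi})=0$ for $\mu$-a.e.\ $\psi$, and freeness of $(\pi_\psi(X_i))_{i\in I}$ with respect to $\tau_\psi$ (the defining property of $\psi\in K$) forces $\tau_\psi(T_{1,\psi}\cdots T_{n,\psi})=0$, so $E(T_1\cdots T_n)=0$. In the equidistributed case the approximating states lie in the closed subset $K_{\mathrm{eq}}=\{\psi\in K:\psi(X_i^k)\text{ is $i$-independent}\}$, hence $\mu$ is supported on $K_{\mathrm{eq}}$; then using centrality of $D$,
\[
E\bigl(d_0\pi(x_i)d_1\cdots\pi(x_i)d_n\bigr)=d_0d_1\cdots d_n\cdot\bigl[\psi\mapsto\psi(X_i^n)\bigr],
\]
and the bracketed function is $i$-independent on $K_{\mathrm{eq}}$, as required. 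The main technical obstacle is the measurable-field/direct-integral bookkeeping, i.e.\ checking that the fibers $\HEu_\psi$, $M_\psi$, $\tau_\psi$ really do assemble into a measurable field over $(K,\mu)$ with each $\psi\mapsto\pi_\psi(X_i)$ decomposable; this reduces to the continuity of $\psi\mapsto\psi(q^*p)$ noted above, but it is the step that most requires care.
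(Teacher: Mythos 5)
Your overall strategy --- bundle the free product states over a base space, put the base algebra in the center, and assemble the fibrewise traces into a $D$-valued conditional expectation --- is the same as the paper's, but your technical realization is genuinely different. The paper forms the $\ell^\infty$-direct product $\prod_{\psi\in F}B_\psi$ over the (discrete) set $F$ of states occurring in the approximating net, takes $\Dt=\ell^\infty(F)$, extracts a weak$^*$ limit of the weight functionals to get a state $\sigmat$ on $\Dt$, and then passes to the GNS quotient of $\sigmat\circ\Et$ to restore faithfulness, invoking two lemmas to the effect that conditional expectations onto, and freeness over, a central subalgebra survive such quotients. You instead compactify the set of admissible states, represent $\phit$ by a measure $\mu$, and form a direct integral, so faithfulness comes for free at the price of the measurable-field formalism. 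Both can be made to work; the paper's version avoids all measurability questions, which, as you yourself note, are the delicate part of yours --- in particular you need $K$ to be pointwise bounded before Tychonoff applies (the bounds $|\psi(X_i^k)|\le C_i^k$ control mixed moments only after you have deduced $\|\pi_\psi(X_i)\|\le C_i$), and you need the identification of $W^*(D\cup\{\pi(x_i)\})$ with the direct integral of the fibres $\{\pi_\psi(X_i)\}''$ before the fibrewise freeness argument is legitimate.

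There is, however, one genuine gap: the assertion that ``the induced $\pi\colon A\to B$ is unital and norm-continuous.'' Your a.e.\ argument shows $\ker\alpha\subseteq\ker\beta$ (where $\beta\colon\Cpx\langle X\rangle\to B$ is the composite), so you obtain a well-defined $*$-homomorphism on the dense $*$-subalgebra $\alpha(\Cpx\langle X\rangle)$ of $A$; but a $*$-homomorphism defined only on a dense $*$-subalgebra of a C$^*$-algebra need not be bounded (consider $\Cpx[x]\subseteq C([0,1])$ sending $x$ to a self-adjoint element of norm $2$), and no fibrewise estimate helps, since the individual norms $\|\pi_\psi(p)\|$ can exceed $\|\alpha(p)\|_A$. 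This is precisely the step on which the paper spends visible effort: because $\phi$ and $\sigma$ are both faithful states, $\|a\|=\limsup_n|\phi((a^*a)^n)|^{1/2n}$ and likewise for $\sigma$, whence $\|\alpha(p)\|=\|\beta(p)\|$ for all $p$ and the map extends isometrically to $A$. You have all the ingredients for this argument ($\sigma$ is faithful on your $B$ and $\sigma\circ\beta=\phi\circ\alpha$ by construction), but as written the extension of $\pi$ from the image of $\alpha$ to all of $A$ is unjustified.
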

\begin{proof}
We fix constants $C_i$ as in Definition~\ref{def:fps}.
If $\psi$ is a free product state on $\Cpx\langle X\rangle$, then the usual GNS construction
yields a $*$--representation $\pi_\psi:\Cpx\langle X\rangle\to B(\HEu_\psi)$ with $\|\pi_\psi(X_i)\|\le C_i$.
Let $B_\psi\subseteq B(\HEu_\psi)$ be the unital C$^*$--algebra generated by the image of $\pi_\psi$
and denote also by
$\psi$ the state on $B_\psi$ so that $\psi\circ\pi_\psi$ is the original functional $\psi$ on $\Cpx\langle X\rangle$.
Of course, the condition that $\psi$ is a free product state on $\Cpx\langle X\rangle$ implies that the variables
$(\pi_\psi(X_i))_{i\in I}$ are free in $B_\psi$ with respect to $\psi$.
Moreover, $B_\psi$ is isomorphic to the free product over the scalars of the abelian C$^*$--algebras generated
by the $\pi_\psi(X_i)$.
Since the restrictions of $\psi$ to these abelian C$^*$--algebras are faithful, and since the free product of faithful
states is faithful~\cite{D98} and since the free product of traces is a trace (see~\cite{VDN92}),
it follows that $\psi$ is a faithful trace on $B_\psi$.

By hypothesis, 
we may write $\phit$ as the limit of a net of convex combinations of free product states, and
consider the set $F$ of all the free product states appearing with nonzero coefficients in these convex combinations.
Let
\[
\Bt=\prod_{\psi\in F}B_\psi=\{(b_\psi)_{\psi\in F}\mid b_\psi\in B_\psi,\,\sup_{\psi}\|b_\psi\|<\infty\}
\]
be the C$^*$--algebra direct product.
Then $\betat:\Cpx\langle X\rangle\to\Bt$ defined by $\betat(p)=(\pi_\psi(p))$
is a $*$--representation.
Let $\Dt\subseteq\Bt$ be the subalgebra consisting of all sequences of scalars, i.e.\
all $(\lambda_\psi1)_{\psi\in F}$ for $\lambda_\psi\in\Cpx$.
Clearly, $\Dt\cong\ell^\infty(F)$ is a subalgebra of the center of $\Bt$, and the map $\Et:\Bt\to\Dt$ given by
$\Et((b_\psi)_{\psi\in F})=(\psi(b_\psi))_{\psi\in F}$ is a conditional expectation that is faithful
because each tracial state $\psi$ is faithful on $B_\psi$.
The variables $(\betat(X_i))_{i\in I}$ are free with respect to $\Et$,
because if for some $n\in\Nats$ and $i(1),\ldots,i(n)\in\Nats$ with $i(j)\ne i(j+1)$,
$p_j(X_{i(j)})$ is a polynomial in $X_{i(j)}$ with coefficients from $\Bt$ and $E(p_j(X_{i(j)}))=0$,
then $\psi(p_j(X_{i(j)}))=0$ for every $\psi\in F$;
by freeness $\psi(p_1(X_{i(1)})\cdots p_n(X_{i(n)}))=0$ for every $\psi\in F$, so also
$\Et(p_1(X_{i(1)})\cdots p_n(X_{i(n)}))=0$.
Note that $\Et$ has the property $\Et(xy)=\Et(yx)$ for all $x,y\in\Bt$, owing to the fact that each $\psi$ is a trace.

In the above construction, if all free product states $\psi\in F$ are equidistributed,
then the moments
\begin{equation}\label{eq:Etmoms}
\Et(d_0\betat(X_i)d_1\cdots\betat(X_i)d_n)
\end{equation}
of the variables $\betat(X_i)$ are independent of $i$.

To a convex combination $\rho=\sum t_\psi\psi$ (with finite support) of elements of $F$,
consider the state $\rhoh$ of $\Dt$ given by the corresponding weighted average,
$\rhoh:(\lambda_\psi1)_{\psi\in F}\mapsto\sum t_\psi\lambda_\psi$.
Let $(\rho_j)_{j\in J}$ denote a net of convex combinations of free product states on $\Cpx\langle X\rangle$
that converges (in the topology of pointwise convergence on $\Cpx\langle X\rangle$) to $\phit$.
Replacing this net by a subnet, if necessary, we may without loss of generality assume
that the corresponding net $\rhoh_j$ converges in the weak$^*$--topology on $\Dt^*$
to a state  $\sigmat$ on $\Dt$.
Then we have $\phit=\sigmat\circ\Et\circ\betat$.

We are almost done, except that $\sigmat\circ\Et$ need not be faithful.
Let $\pi_{\sigmat\circ\Et}$ denote the GNS representation associated to the tracial state
$\sigmat\circ\Et$ on $\Bt$, and let $B=\pi_{\sigmat\circ\Et}(\Bt)$ and $D=\pi_{\sigmat\circ\Et}(\Dt)$.
Then the corresponding state $\sigma$ on $B$ is a faithful trace.
By Lemma~\ref{lem:Ephi}, there is a conditional expectation $E:B\to D$ so that
\[
E(\pi_{\sigmat\circ\Et}(x))=\pi_{\sigmat\circ\Et}(\Et(x)),\qquad(x\in\Bt)
\]
and by Lemma~\ref{lem:quotientfree} and freeness of $(\betat(X_i))_{i\in I}$ with respect to $\Et$,
the family
\begin{equation}\label{eq:pbtfam}
(\pi_{\sigmat\circ\Et}(\betat(X_i)))_{i\in I}
\end{equation}
is free (over $D$) with respect to $E$.

Let $\beta=\pi_{\sigmat\circ\Et}\circ\betat:\Cpx\langle X\rangle\to B$.
Then $\phit=\phi\circ\alpha=\sigma\circ\beta$.
We will now define a $*$--homomorphism $\pi:A\to B$
so that all triangles in the diagram
\begin{equation*}%\label{eq:defdiag}
\xymatrix{
\Cpx\langle X\rangle \ar[r]^{\alpha} \ar  [d]_{\beta}  &A \ar[dl]_{\pi}
       \ar[d]^{\phi} \\
B\ar[r]_{\sigma} & \Cpx
}
\end{equation*}
commute.
Indeed, since $\phi$ and $\sigma$ are faithful, we have
\begin{align*}
\|a\|&=\limsup_{n\to\infty}|\phi((a^*a)^n)|^{1/2n},\qquad(a\in A) \\
\|b\|&=\limsup_{n\to\infty}|\sigma((b^*b)^n)|^{1/2n},\qquad(b\in B).
\end{align*}
But this implies $\|\alpha(p)\|=\|\beta(p)\|$ for all $p\in\Cpx\langle X\rangle$.
So the $*$--homomorphism defined on the image of $\alpha$ by $\alpha(p)\mapsto\beta(p)$
is isometric and extends to an isometric $*$--homomorphism $\pi:A\to B$, as required.
Since $\pi(x_i)=\beta(X_i)$,
by freeness of the family~\eqref{eq:pbtfam} we have that
$(\pi(x_i))_{i\in I}$ is free with respect to $E$.

In the case that all free product states are equidistributed, the observation above regarding the moments~\eqref{eq:Etmoms}
of $\betat(X_i)$ with respect to $\Et$ implies that the moments of $\pi(x_i)$ with respect to $E$ are
independent of $i$.
\end{proof}

\begin{prop}\label{prop:centraltail}
Let $(x_i)_{i\in I}$ be quantum exchangeable random variables in $(\Mcal,\phi)$,
suppose $\Mcal$ is generated by $\{x_i\mid i\in I\}$ and
let $\alpha:\Cpx\langle X\rangle\to\Mcal$ be the $*$--homomorph\-ism given by $\alpha(X_i)=x_i$.
Then $\phi\circ\alpha$ is a limit of convex combinations of uniformly bounded equidistributed free product states
with respect to $(x_i)_{i\in I}$
if and only if the tail algebra
$\Nc$ lies in the center of $\Mcal$.
\end{prop}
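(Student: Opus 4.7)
The plan is to prove the two implications separately, using the free-product-over-the-center machinery of Section~\ref{sec:fps}. For the ``if'' direction, assume $\Nc \subseteq Z(\Mcal)$, so that $\Nc$ is abelian; identify $\Nc = C(\Omega)$ for its Gelfand spectrum $\Omega$. Let $E : \Mcal \to \Nc$ be the $\phi$-preserving tail conditional expectation, and for each $\omega \in \Omega$ set $\rho_\omega = \ev_\omega \circ E$. By Theorem~\ref{thm:amalgfp} the subalgebras $\Ac_i = W^*(\Nc \cup \{x_i\})$ are free over $\Nc$ with respect to $E$, and Lemma~\ref{lem:Dcenter} (whose proof goes through verbatim at the W$^*$-level) then implies that $(\Ac_i)_{i \in I}$, and hence $(x_i)_{i \in I}$, is free over $\Cpx$ with respect to each $\rho_\omega$. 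The amalgamated free product decomposition~\eqref{eq:Mfp} exhibits the $\Ac_i$ as mutually isomorphic copies of $\Ac_1$ with $x_i \leftrightarrow x_1$, so $E(x_i^k) = E(x_1^k)$ and hence $\rho_\omega(x_i^k)$ is independent of $i$; the uniform bound $|\rho_\omega(x_i^k)| \le \|x_i\|^k$ is then immediate.

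To realize $\phi \circ \alpha$ as a limit of convex combinations of the $\rho_\omega$'s in the topology of pointwise convergence on $\Cpx\langle X\rangle$, note that for every monomial $p \in \Cpx\langle X\rangle$, both $\phi(\alpha(p))$ and $\rho_\omega(\alpha(p))$ factor through $E(\alpha(p)) \in C(\Omega)$ via the probability measure $\mu_\phi$ representing $\phi|_\Nc$ and via evaluation at $\omega$, respectively. Weak$^*$-density of finitely supported probability measures among Radon probability measures on $\Omega$ then produces, for any finite collection of such $E(\alpha(p))$'s, an approximating convex combination $\sum_j t_j \delta_{\omega_j}$; the corresponding convex combination $\sum_j t_j \rho_{\omega_j}$ approximates $\phi \circ \alpha$ on the chosen monomials.

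For the ``only if'' direction, let $A$ be the C$^*$-algebra generated by $\{x_i\}$ in $\Mcal$ and apply Proposition~\ref{prop:limfreeprodst} to $\phi|_A$: this produces a unital C$^*$-algebra $B$, a subalgebra $D \subseteq Z(B)$, a faithful conditional expectation $E_B : B \to D$, a faithful tracial state $\sigma = \sigma \circ E_B$, and a $*$-homomorphism $\pi : A \to B$ satisfying $\phi|_A = \sigma \circ \pi$, with $(\pi(x_i))_{i \in I}$ free over $D$ with respect to $E_B$ and with the moments $E_B(d_0 \pi(x_i) d_1 \cdots \pi(x_i) d_n)$ independent of $i$. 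Passing to GNS closures with respect to $\sigma$ yields a W$^*$-noncommutative probability space $(\widetilde B, \widetilde\sigma)$ together with $\widetilde D \subseteq Z(\widetilde B)$, a normal faithful conditional expectation $\widetilde E_B : \widetilde B \to \widetilde D$ preserving $\widetilde\sigma$, and a normal extension $\widetilde\pi : \Mcal \to \widetilde B$ of $\pi$ which is injective because $\phi$ and $\widetilde\sigma$ are faithful. Normality of $\widetilde E_B$ together with standard density arguments preserves both the freeness of $(\widetilde\pi(x_i))$ over $\widetilde D$ and the $i$-independence of the amalgamated moments, so Proposition~\ref{prop:free} applies and yields quantum exchangeability of $(\widetilde\pi(x_i))_{i \in I}$ in $(\widetilde B, \widetilde\sigma)$, while Proposition~\ref{prop:tail} places its tail algebra inside $\widetilde D$, hence inside $Z(\widetilde B)$. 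Pulling back along the injective normal $\widetilde\pi$, which commutes both with $W^*$-generation and with the countable intersection defining the tail, one identifies $\Nc$ as a subalgebra of $Z(\Mcal)$.

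The main technical obstacle I anticipate is the transfer in the ``only if'' direction from the C$^*$-algebraic output of Proposition~\ref{prop:limfreeprodst} to the W$^*$-setting required by Propositions~\ref{prop:free} and~\ref{prop:tail}: one must carefully verify that the conditional expectation, the freeness, and the $i$-independence of the amalgamated moments all extend normally to the GNS closure, and that $\widetilde\pi$ intertwines the tail constructions on the two sides. The remaining steps, including the W$^*$-version of Lemma~\ref{lem:Dcenter} and the weak$^*$-approximation of $\mu_\phi$ by finitely supported measures, are routine.
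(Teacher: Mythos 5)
Your proposal is correct and follows essentially the same route as the paper: the ``if'' direction via Theorem~\ref{thm:amalgfp}, the Gelfand identification $\Nc\cong C(\Omega)$, Lemma~\ref{lem:Dcenter}, and approximation of $\phi\restrict_\Nc$ by finitely supported measures; the ``only if'' direction via Proposition~\ref{prop:limfreeprodst} followed by passage to the GNS/strong-operator closure (where centrality of the copy of $D$ and the amalgamated free product structure persist) and an application of Proposition~\ref{prop:tail}. The technical transfer from the C$^*$- to the W$^*$-setting that you flag is exactly the step the paper handles by identifying the strong-operator closure with a W$^*$-amalgamated free product and invoking Kaplansky density.
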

\begin{proof}
Suppose the tail algebra lies in the center of $\Mcal$.
Then by Theorem~\ref{thm:amalgfp}, $\Mcal$ is
the free product with amalgamation over the tail algebra $\Nc$, and $\phi=\phi\restrict_\Nc\circ E$,
where $E:\Mcal\to\Nc$ is the $\phi$--preserving conditional expectation with respect to which the
algebras $\Ac_i=W^*(\Nc\cup\{x_i\})$ are free over $\Nc$.
Regarding $\Nc$ as a commutative C$^*$--algebra, by the Gelfand theorem, we have $\Nc\cong C(\Omega)$.
Then, by a classical result,
every state on $C(\Omega)$ lies in the closed convex hull of the set of
point evaluation maps $\{\ev_\omega\mid\omega\in\Omega\}$.
By Lemma~\ref{lem:Dcenter}, every functional $\ev_\omega\circ E\circ\alpha$ is an equidistributed
free product state
on $\Cpx\langle X\rangle$
and clearly the boundedness criterion is satisfied with constants $C_i=\|x_i\|$.
Taking convex combinations of the $\ev_\omega$ that approximate $\phi\restrict_\Nc$, we easily see
that $\phi\circ\alpha$ is a limit of convex combinations of equidistributed free product states.

Conversely, suppose that $\phi\circ\alpha$ is a limit of convex combinations of uniformly bounded
equidistributed free product states.
Consider the C$^*$--subalgebra $C^*(\{x_i\mid i\in I\})$
of $\Mcal$ generated by the $x_i$.
By Proposition~\ref{prop:limfreeprodst}, there is a unital C$^*$--algebra $B$ and a unital subalgebra $D$ of the
center of $B$ with a faithful conditional expectation $E:B\to D$ and
a unital, injective $*$--homomorphism $\pi:C^*(\{x_i\mid i\in I\})\to B$ so that the elements $(\pi(x_i))_{i\in I}$ are
free with respect to $E$, and so that the moments of $\pi(x_i)$ are independent of $i$;
furthermore, there is a faithful state $\sigma$ on $D$ so that $\sigma\circ E\circ\pi=\phi\restrict_{\Afr}$.
Let $\Afr$ be the C$^*$--subalgebra of $B$ generated by $D\cup\{\pi(x_i)\mid i\in I\}$.
Then $\Afr$ is an amalgamated free product of C$^*$--algebras $C^*(D\cup\{x_i\})$, with amalgamation over $D$.
We take the Hilbert space representation $\rho$ of $\Afr$ that is the GNS
construction for the restriction of the state $\sigma\circ E$ to $\Afr$.
The strong--operator--topology closure of $\rho(\Afr)$
is a von Neumann algebra, $\Qc$, that is isomorphic to a free product
\begin{equation}\label{eq:overDc}
(*_\Dc)_{i\in I}(W^*(\Dc\cup\{\rho(x_i)\},\overline{E})
\end{equation}
with amalgamation over
the strong--operator--topology closure $\Dc$ of $\rho(D)$.
Taking strong--operator--topology limits (using Kaplansky's density theorem), we easily see that $\Dc$
lies in the center of $\Qc$.
Since $\sigma\circ E$ and $\phi$ are faithful, the composition $\rho\circ\pi$, when compressed to a Hilbert
subspace, equals the GNS construction of the restriction of $\phi$ to $\Afr$.
Therefore, this mapping of $\Afr$ into the strong--operator--closure of $\Afr$ is canonically isomorphic
to the inclusion of $\Afr$ in $\Mcal$, and we may regard $\Mcal$ as embedded in the amalgamated
free product~\eqref{eq:overDc}.
By Proposition~\ref{prop:tail}, the tail algebra of $\{x_i\mid i\in I\}$ lies in $\Dc$.
Thus, the tail algebra is in the center of $\Mcal$.
\end{proof}

The following easy example shows that the tail algebra can be commutative without lying in the center of the algebra
generated by the quantum exchangeable sequence.

\begin{example}\label{example}
Let $\Bc=\Cpx\oplus\Cpx$ embed into $M_2(\Cpx)$ as the diagonal matrices.
For each $i\in\Nats$, let $\Ac_i$ be a copy of $M_2(\Cpx)$ and let $E_i:\Ac_i\to\Bc$ be 
the conditional expectation taking a matrix to its diagonal.
Let
\[
(\Mcal,E)=(*_\Bc)_{i=1}^\infty(\Ac_i,E_i)
\]
be the amalgamated free product of von Neumann algebras.
We note that $\Mcal$ is easily seen to be isomorphic to the free group factor $L(\mathbf{F}_\infty)$.
Let $\psi$ be any faithful state  on $\Bc$.
Then $\phi=\psi\circ E$ is a normal faithful state on $\Mcal$.
Fix $0<t<1/2$ and let $x_i$ be the copy of the projection
$\left(\begin{smallmatrix} t & \sqrt{t(1-t)} \\ \sqrt{t(1-t)} & 1-t \end{smallmatrix}\right)$
in $\Ac_i$.
By Proposition~\ref{prop:free}, the sequence $(x_i)_{i=1}^\infty$ is quantum exchangeable.
Applying Proposition~\ref{prop:tail}, we see that the tail algebra $\Nc$ of the sequence is $\Bc\cong\Cpx\oplus\Cpx$
and, incidentally, the von Neumann algebra generated by the sequence $\{x_i\mid i\in\Nats\}$
is all of $\Mcal$.
\end{example}

\begin{bibdiv}
\begin{biblist}

\bib{BD01}{article}{
  author={Blanchard, Etienne F.},
  author={Dykema, Ken},
  title={Embeddings of reduced free products of operator algebras},
  journal={Pacific J. Math.},
  volume={199},
  year={2001},
  pages={1--19}
}

\bib{D98}{article}{
   author={Dykema, Ken},
   title={Faithfulness of free product states},
   journal={J. Funct. Anal.},
   volume={154},
   date={1998},
   pages={323--329}
}

\bib{HS55}{article}{
   author={Hewitt, Edwin},
   author={Savage, Leonard J.},
   title={Symmetric measures on Cartesian products},
   journal={Trans. Amer. Math. Soc.},
   volume={80},
   date={1955},
   pages={470--501},
%   issn={0002-9947},
%   review={\MR{0076206 (17,863g)}},
}

\bib{Ka05}{book}{  
author = 	 {Kallenberg, Olaf},
title = 	 {Probabilistic Symmetries and Invariance Principles},
publisher ={Springer-Verlag},
year = 	 {2005},
series = 	 {Probability and Its Applications}
}

\bib{K10}{article}{
  author={K\"ostler, Claus},
  title={A noncommutative extended de Finetti theorem},
  journal={J. Funct. Anal.},
  volume={258},
  year={2010},
  pages={1073-1120}
}

\bib{KSp09}{article}{
  author={K\"ostler, Claus},
  author={Speicher, Roland},
  title={A noncommutative de Finetti theorem:
         invariance under quantum permutations is equivalent to freeness with amalgamation},
  journal={Comm. Math. Phys.},
  volume={291},
  year={2009},
  pages={473--490}
}

\bib{Sp98}{book}{
  author={Speicher, Roland},
  title={Combinatorial Theory of the Free Product
    with Amalgamation and Operator--Valued Free Probability Theory},
  series={Mem. Amer. Math. Soc.},
  volume={627},
  year={1998}
}

\bib{St69}{article}{
  author={St\o{}rmer, Erling},
  title={Symmetric states of infinite tensor products of C$^*$--algebras},
  journal={J. Funct. Anal.},
  volume={3},
  year={1969},
  pages={48--68}
}

\bib{VDN92}{book}{
  author={Voiculescu, Dan},
  author={Dykema, Ken},
  author={Nica, Alexandru},
  title={Free random variables},
  series={CRM Monograph Series},
  volume={1},
  publisher={American Mathematical Society},
  address={Providence, RI},
  year={1992}
}

\bib{W98}{article}{
  author={Wang, S.},
  title={Quantum symmetry groups of finite spaces},
  journal={Commun. Math. Phys.},
  volume={195},
  pages={195--211},
  year={1998}
}

\end{biblist}
\end{bibdiv}

\end{document}